\newtheorem{theorem}{Theorem}[section]
\newtheorem{lemma}{Lemma}[section]
\newtheorem{proposition}{Proposition}[section]
\newtheorem{corollary}{Corollary}[section]
\theoremstyle{definition}
\newtheorem{definition}{Definition}[section]
\newtheorem{example}{Example}[section]
\theoremstyle{remark}
\newtheorem{remark}{Remark}[section]
\numberwithin{equation}{section}
\begin{document}
\title[Using an implicit function to prove common fixed point theorems]
{Using an implicit function to prove common fixed point theorems}
\author[Imdad, Gubran and Ahmadullah]{Mohammad Imdad$^{1}$, Rqeeb Gubran$^{2}$ and Md Ahmadullah$^{3}$ }

\maketitle

\begin{center}
{\footnotesize Department of Mathematics, Aligarh Muslim University, Aligarh-202002, India.\\
 mhimdad@gmail.com$^{1}$,
rqeeeb@gmail.com$^{2}$, ahmadullah2201@gmail.com$^{3}$\\}

\end{center}
\begin{abstract}
	In this paper, we prove common fixed point results for a self-mappings satisfying an implicit function which is general enough to cover a multitude of known as well as unknown contractions. Our results modify, unify, extend and generalize many relevant results of the existing literature. Interestingly, unlike several other cases, our main results deduce a nonlinear order-theoretic version of a well-known fixed point theorem (proved for quasi-contraction) due to \'{C}iri\'{c} (Proc. Amer. Math. Soc. (54) 267-273, 1974). Finally, in the setting of metric spaces, we drive a sharpened version of Theorem 1 due to Berinde and Vetro (Fixed Point Theory Appl. 2012:105).
\end{abstract}

\vspace{.3cm} \noindent {\bf Keywords}: Implicit function, common fixed point, quasi-contraction, comparison function.

\vspace{.3cm}\noindent {\bf 2010 AMS Classification}: 47H10; 54H25

\section{Introduction and Preliminaries}

 Prior to Popa \cite{popa1997fixed, popa1999some}, researchers of metric fixed point theory use to prove a theorem for every contraction condition which amounts to saying that once there is a relatively new contraction, one is required to prove a separate theorem for the same. But Popa \cite{popa1997fixed} initiated the idea of implicit function with a view to cover several contraction conditions in one go.\\

 In recent years, the idea of implicit function has been utilized by several authors and by now, there exists a considerable literature on this theme. To mention a few, one can be referred to \cite{ali2009unifying,altun2009some,aliouche2007common,popa2005general,aliouche2009general,berinde2012approximating,popa2001general,popa2002fixed,vetro2013common,
 ullah2016unified,popa2010using,imdad2008general,ali2008implicit,imdad2002remarks,imdad2013employing} and references therein. Also, one of the interesting articles on this theme is due to Berinde and Vetro \cite{berinde2012common} wherein authors proved results on coincidence as well as common fixed point  for a general class of self-mappings covered under an implicit function in the settings of metric and ordered metric spaces.  However, we observe that the order-theoretical result of \cite{berinde2012common} is not correct in its present form.\\

Before undertaking specific discussions, we recall  the background material needed in our subsequent discussions.\\

 We denote by $\mathbb{R}$, $\mathbb{N}$ and $\mathbb{N}_0$ respectively the set of all real numbers, the set of natural numbers  and the set $\mathbb{N}\cup\{0\} $. As usual, $I_X$ denotes the identity mapping defined on $X$. For brevity, we write $Tx$ instead of $T(x)$.

\begin{definition} \cite{o2008fixed} A triplet
	$(X,d,\preceq)$ is called an ordered metric space if $(X,d)$ is a
	metric space and $(X,\preceq)$ is an ordered set. Moreover, two elements $x,y\in X$ are said to be comparable if either $x\preceq y$ or $x\succeq y$. For brevity,
	we denote it by $x\prec\succ y$.

\end{definition}

Let $\{x_n\}$ be a sequence in an ordered metric space $(X,d,\preceq)$. Then, if $\{x_n\}$ is an increasing (resp. decreasing, monotone) and converges to $x$, we denote it by $x_n\uparrow x$ (resp.  $x_n\downarrow x$,  $x_n\uparrow\downarrow x$).

\begin{definition} \cite{ciric2009monotone}\label{1.5} Let $(T,S)$ be a pair of self-mappings on an ordered metric space $(X,d,\preceq)$. Then the mapping $T$ is said to be
\begin{enumerate}
\item[{($i$)}] $S$-increasing if (for any $x,y\in X$)
$Sx\preceq Sy\Rightarrow Tx\preceq Ty$,
\item[{($ii$)}] $S$-decreasing if (for any $x,y\in X$)
$Sx\preceq Sy\Rightarrow Tx\succeq Ty$,
\item[{($iii$)}] $S$-monotone if $T$ is either $S$-increasing or
$S$-decreasing.
\end{enumerate}
\end{definition}

On setting $S=I_X$,  Definition \ref{1.5}$(i)$ (resp. \ref{1.5}$(ii)$, \ref{1.5}$(iii)$) is reduced to the usual definition of the increasing (resp. decreasing, monotone) self-mapping ($T$ on $X$).

\begin{definition}\cite{vetro2013common} Let $(T,S)$ be a pair of self-mappings on an ordered metric space $(X,d,\preceq)$ with $T(X)\subseteq S(X)$. For every $x_0\in X$, consider the sequence $\{x_n\}\subset X$ defined by $Tx_n=Sx_{n+1}$, for all $n\in \mathbb{N}_0$. Then $\{Tx_n\}$ is called $T$-$S$-sequence with initial point $x_0$.
\end{definition}

\begin{definition}\cite{jungck1976commuting} Let $(T,S)$ be a pair of self-mappings on an ordered metric space $(X,d,\preceq)$. Then
a coincidence point of the pair $(T,S)$ is a point $x\in X$ such that $Sx=Tx$. If  $x^*\in X$ is such that  $Sx=Tx=x^*$, then   $x^*$ is called a point of coincidence of the pair $(T, S).$ If  $x^*=x$, then $x$ is said to be a common fixed point.

\end{definition}

\noindent By $C(T,S)$, We denote the set of all coincidence points of the pair $(T,S)$.

\begin{definition} \cite{jungck1986compatible,jungck1996common,alam2015enriching} Let $(T,S)$ be a pair of self-mappings on an ordered metric space $(X,d,\preceq)$. Then
 the pair is said to be
\begin{enumerate}[(i)]
\item compatible if $\lim\limits_{n\to \infty}d(S(Tx_n),T(Sx_n))=0,$ whenever $\{x_n\}$ is a sequence in $X$ such that
$\lim\limits_{n\to \infty}Sx_n=\lim\limits_{n\to \infty}Tx_n,$

\item  $\overline{\rm O}$-compatible if for any sequence $\{x_n\}\subset X$ with $Sx_n\uparrow z\;{\rm and}\;Tx_n\uparrow z$ (for some $z\in X$) implies $\lim\limits_{n\to \infty}d(S(Tx_n),T(Sx_n))=0,$

\item $\underline{\rm O}$-compatible if for any sequence $\{x_n\}\subset X$  $\{x_n\}\subset X$ with $Sx_n\downarrow z\;{\rm and}\;Tx_n\downarrow z$ (for some $z\in X$) implies $\lim\limits_{n\to \infty}d(S(Tx_n),T(Sx_n))=0,$

\item {\rm O}-compatible if for any sequence $\{x_n\}\subset X$ with
$Sx_n\uparrow\downarrow z\;{\rm and}\;Tx_n\uparrow\downarrow z$ (for some $z\in X$) implies $\lim\limits_{n\to \infty}d(S(Tx_n),T(Sx_n))=0.$

\item  weakly compatible if $S(Tx)=T(Sx)$, for every coincidence point $x\in X$.

\end{enumerate}
\end{definition}

\begin{remark} \label{rem5} In an ordered metric space, compatibility $\Rightarrow${O}-compatibility $\Rightarrow \overline{\rm O}$-compatibility (as well as $\underline{\rm O}$-compatibility) $\Rightarrow$ weak compatibility. \end{remark}

\begin{definition} \cite{alam2015enriching}  An ordered metric space $(X,d,\preceq)$ is called  $\overline{\rm O}$-complete (resp. $\underline{\rm O}$-complete,  {\rm O}-complete) if every increasing (resp. decreasing, monotone) Cauchy sequence converges in $X$.\end{definition}


\begin{remark} \label{rem1} In an ordered metric space, completeness $\Rightarrow$ {\rm O}-completeness $\Rightarrow \overline{\rm O}$-completeness (as well as $\underline{\rm O}$-completeness). \end{remark}

\begin{definition} \cite{alam2015comparable} Let $(T,S)$ be a pair of self-mappings on an ordered metric space $(X,d,\preceq)$. We say that
$(X,\preceq)$ is $(T,S)$-directed if for every pair $x,y\in X$,
$\exists~z\in X$ such that $Tx\prec\succ Sz$ and $Ty\prec\succ
Sz$.\end{definition}

Particularly, for $S=I_X$, $(X,\preceq)$ is called $T$-directed.

\begin{definition} \cite{alam2015comparable} Let $T$ be self-mapping on an ordered metric space $(X,d,\preceq)$. We say that $T$ is comparable mapping if it maps comparable elements to comparable elements.\end{definition}

\begin{definition}\label{g-cnts} \cite{sastry2000common} Let $(T,S)$ be a pair of
self-mappings on a metric space $(X,d)$ and $x\in X$. We say that
$T$ is $S$-continuous at $x$ if for any sequence $\{x_n\}\subset X$,
$$Sx_n\stackrel{d}{\longrightarrow} Sx\Rightarrow Tx_n\stackrel{d}{\longrightarrow} Tx.$$
Moreover, $T$ is called $S$-continuous if it is $S$-continuous at
every point of $X$.
\end{definition}

Notice that, on setting $S=I_X,$ Definition \ref{g-cnts} reduces to the usual definition of continuity.

\begin{definition} \label{go-cnts}\cite{alam2015enriching} Let $(T,S)$ be a pair of
self-mappings on a metric space $(X,d)$ and $x\in X$.
Then $T$ is called $(S,\overline{\rm O})$-continuous (resp. $(S,\underline{\rm O})$-continuous, $(S,{\rm O})$-continuous) at $x$ if $Tx_n\stackrel{d}{\longrightarrow} Tx,$ for every sequence $\{x_n\}\subset X$ with $Sx_n\uparrow Sx$ (resp.  $Sx_n\downarrow Sx$, $Sx_n \uparrow\downarrow Sx$). Moreover, $T$ is called $(S,{\rm O})$-continuous (resp.
$(S,\overline{\rm O})$-continuous, $(S,\underline{\rm O})$-continuous) if it is $(S,{\rm O})$-continuous (resp.
$(S,\overline{\rm O})$-continuous, $(S,\underline{\rm
O})$-continuous) at every point of $X$.
\end{definition}

On setting $S=I_X$,  Definition \ref{go-cnts}  is reduces to the usual definition of the $\overline{\rm O}$-continuity (resp. $\underline{\rm O}$-continuity, ${\rm O}$-continuity) of a self-mapping ($T$ on $X$).


\begin{remark} \label{gocontinuity}In an ordered metric space, $S$-continuity $\Rightarrow$ $(S,{\rm O})$-continuity $\Rightarrow$ $(S,\overline{\rm O})$-continuity (as well as $(S,\underline{\rm O})$-continuity).
 \end{remark}

\begin{lemma}\label{prop1} \cite{alam2014some} Let $(T, S)$ be a pair of self-mappings defined on an ordered set $(X,\preceq)$. If $T$ is
$S$-monotone and
$Sx=Sy$, then $Tx=Ty$.
\end{lemma}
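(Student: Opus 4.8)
The plan is to exploit the reflexivity and antisymmetry of the partial order $\preceq$ together with the definition of $S$-monotonicity. The key observation is that the hypothesis $Sx=Sy$ gives us \emph{both} $Sx\preceq Sy$ and $Sy\preceq Sx$, so we may feed each of these two comparisons into the $S$-monotonicity hypothesis and then squeeze $Tx$ and $Ty$ together.

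Concretely, I would split into the two cases permitted by $S$-monotonicity. First suppose $T$ is $S$-increasing. From $Sx\preceq Sy$ we get $Tx\preceq Ty$, and from $Sy\preceq Sx$ we get $Ty\preceq Tx$; antisymmetry of $\preceq$ then forces $Tx=Ty$. Next suppose $T$ is $S$-decreasing. From $Sx\preceq Sy$ we get $Tx\succeq Ty$, and from $Sy\preceq Sx$ we get $Ty\succeq Tx$, i.e. $Tx\preceq Ty$; again antisymmetry yields $Tx=Ty$. Since $T$ being $S$-monotone means it is $S$-increasing or $S$-decreasing, one of the two cases applies and the conclusion follows.

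There is essentially no obstacle here: the proof is a two-line application of the order axioms, and the only thing to be careful about is to invoke $S$-monotonicity with both orderings of $Sx$ and $Sy$ (rather than just one) so that antisymmetry can be used. For brevity one might even remark that the $S$-decreasing case reduces to the $S$-increasing one by noting that $\succeq$ is itself a partial order, but writing out both cases is equally short and perhaps clearer.
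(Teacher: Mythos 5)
Your proof is correct and is essentially the standard argument (the paper itself only cites Lemma 2.1 of \cite{alam2014some} without reproducing a proof): from $Sx=Sy$ one extracts both $Sx\preceq Sy$ and $Sy\preceq Sx$ by reflexivity, applies $S$-increasingness or $S$-decreasingness to each, and concludes by antisymmetry. Nothing further is needed.
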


\begin{lemma}\label{prop2} \cite{alam2014some} If the pair $(T,S)$ is weakly
compatible, then every point of coincidence of the pair remains a
coincidence point of it.
 \end{lemma}

\begin{lemma}\label{lem2}\cite{haghi2011some} Let $T$ be a self-mapping on a non-empty set $X$. Then, there exists a subset $A\subseteq X$
such that $T(A)=T(X)$ and $T: A \rightarrow X$ is one-one.
\end{lemma}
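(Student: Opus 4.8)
The plan is to realize $A$ as a set of representatives of the fibers of $T$, which is where the axiom of choice enters. First I would introduce the equivalence relation $\sim$ on $X$ defined by $x\sim y \iff Tx=Ty$; reflexivity, symmetry and transitivity are immediate, so $X$ is partitioned into the equivalence classes $\{X_i\}_{i\in\Lambda}$, where two elements lie in the same class precisely when they have the same image under $T$.

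Next I would invoke the axiom of choice to pick, for each index $i\in\Lambda$, a point $a_i\in X_i$, and then set $A:=\{a_i:i\in\Lambda\}\subseteq X$. The verification then splits into two routine checks. For the equality $T(A)=T(X)$: the inclusion $T(A)\subseteq T(X)$ is trivial, while for the reverse inclusion, given $y\in T(X)$ write $y=Tx$ with $x\in X$; if $x\in X_i$ then $x\sim a_i$, hence $y=Tx=Ta_i\in T(A)$. For injectivity of $T\colon A\to X$: if $a_i,a_j\in A$ satisfy $Ta_i=Ta_j$, then $a_i\sim a_j$, so $a_i$ and $a_j$ lie in the same equivalence class, which forces $i=j$ and therefore $a_i=a_j$.

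There is essentially no obstacle here: the statement is a standard set-theoretic fact and the argument above is complete modulo the (unavoidable) appeal to the axiom of choice when selecting the representatives $a_i$. The only point worth flagging is that $A$ is far from unique, but the lemma only asserts existence, so this causes no difficulty.
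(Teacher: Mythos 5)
Your argument is correct and is exactly the standard one: the paper itself gives no proof, merely citing Haghi, Rezapour and Shahzad, whose proof is precisely this construction of a set of representatives for the fibers of $T$ via the axiom of choice. Nothing to add.
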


\section{Implicit function}
In order to describe our implicit function, let us define comparison function:
\begin{definition} \cite{matkowski1975integrable}\label{defcompa} A function $\varphi:[0,\infty)\to [0,\infty)$ is called comparison function (or Matkowski type function) if it satisfies the following conditions:
	\begin{enumerate}[(i)]
		\item $\varphi$ is increasing on $[0, \infty)$,
		\item $\lim\limits_{n\to\infty}\varphi^n(t)=0,\text{ for all }t>0.$
	\end{enumerate}
\end{definition}

Now, we record some results which needed later to prove our results. We begin by the following lemma which highlights some basic proprieties of the comparison function.
\begin{lemma}\label{samtur} Let $\varphi$ be a comparison function, then
	\begin{enumerate}[(i)]
		\item $\varphi(t)<t$, for all $t>0$.
		\item $\varphi(0)=0$.
	\end{enumerate}
\end{lemma}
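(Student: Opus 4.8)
The plan is to prove (i) by a contradiction argument exploiting monotonicity together with the asymptotic condition $\lim_{n\to\infty}\varphi^n(t)=0$, and then to derive (ii) as an easy consequence of (i).

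For part (i), suppose to the contrary that there is some $t_0>0$ with $\varphi(t_0)\ge t_0$. Since $\varphi$ is increasing on $[0,\infty)$, applying $\varphi$ to both sides of $\varphi(t_0)\ge t_0$ yields $\varphi^2(t_0)\ge\varphi(t_0)\ge t_0$, and an easy induction gives $\varphi^n(t_0)\ge t_0>0$ for every $n\in\mathbb{N}$. Letting $n\to\infty$ then forces $\lim_{n\to\infty}\varphi^n(t_0)\ge t_0>0$, which contradicts condition (ii) of Definition \ref{defcompa}. Hence $\varphi(t)<t$ for all $t>0$.

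For part (ii), note first that $\varphi(0)\ge 0$ since $\varphi$ takes values in $[0,\infty)$. For the reverse inequality, use monotonicity of $\varphi$ together with part (i): for every $t>0$ we have $\varphi(0)\le\varphi(t)<t$. Since this holds for all $t>0$, letting $t\to 0^{+}$ gives $\varphi(0)\le 0$, and therefore $\varphi(0)=0$.

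There is no serious obstacle in this argument; the only point requiring a little care is ensuring the induction in (i) is set up correctly, i.e.\ that monotonicity of $\varphi$ is genuinely used to propagate the inequality $\varphi^{n}(t_0)\ge t_0$ from one step to the next, and that in (ii) we invoke part (i) rather than trying to argue about $\varphi(0)$ directly from the limit condition (which says nothing about $t=0$).
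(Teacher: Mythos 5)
Your proof is correct and follows essentially the same route as the paper: part (i) is the identical contradiction argument via $\varphi^n(t_0)\ge t_0$ and the limit condition, and part (ii) likewise combines monotonicity with part (i) (you phrase it as a direct limit $t\to 0^{+}$ where the paper argues by contradiction, but the underlying idea is the same).
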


\begin{proof}~ 	$(i)$ Assume there exists $t_0>0$ with $t_0\leq \varphi(t_0)$. Since $\varphi$ is increasing
	$\varphi(t_0)\leq \varphi^2(t_0)$, it follows that $t_0\leq\varphi(t_0)\leq \varphi^2(t_0)$.
	 In general $t_0\leq \varphi^n(t_0)$ for all $n\in \{1,2,...\}$, and by letting
	 $n \rightarrow \infty$ we get  $t_0\leq 0$ which contradicts our assumption.
	
	 \indent $(ii)$ On the contrary, suppose $\varphi (0)=t$ for some $0<t$. Since $0<t$ and
	  $\varphi$ is increasing, $\varphi(0) < \varphi(t)$. It follows that $t<\varphi(t)$, which contradicts (i).
\end{proof}

Now, we consider the family $\mathfrak{F}$ of all real continuous functions $F:\mathbb{R}^6_+\rightarrow\mathbb{R}_+$. In the respect of the family $\mathfrak{F}$, the following conditions will be utilized in our results:

\begin{itemize}
	\item[($F_{1a}$)] $F$ is decreasing in the fifth variable and there exist a comparison function $\varphi$ such that $F(u,v,v,u,u+v,0)\leq 0$, for $u,v\geq 0$ implies that $u\leq \varphi(v).$
	\item[($F_{1b}$)] $F$ is decreasing in the fourth variable and there exist a comparison function $\varphi$ such that $F(u,v,0,u+v,u,v)\leq 0$, for $u,v\geq 0$ implies that  $u\leq \varphi(v).$
	\item[($F_{1c}$)] $F$ is decreasing in the third variable and  there exist a comparison function $\varphi$ such that $F(u,v,u+v,0,v,u)\leq 0$, for $u,v\geq 0$ implies that  $u\leq\varphi(v).$
	\item[($F_2$)] $F(u,u,0,0,u,u)>0$, for all $u>0$.
\end{itemize}

\vspace{.3cm} In \cite{berinde2012approximating}, Berinde considered the family  $\mathcal{F}$ of all real continuous functions $F:\mathbb{R}^6_+\rightarrow\mathbb{R}_+$ and the following conditions:

\begin{itemize}	
	\item[($f_{1a}$)] $F$ is decreasing in the fifth variable and $F(u,v,v,u,u+v,0)\leq 0$, for $u,v\geq 0$ implies that there exists $h\in [0,1)$ such that $u\leq hv.$	
	\item[($f_{1b}$)] $F$ is decreasing in the fourth variable and $F(u,v,0,u+v,u,v)\leq 0$, for $u,v\geq 0$ implies that there exists $h\in [0,1)$ such that $u\leq hv.$	
	\item[($f_{1c}$)] $F$ is decreasing in the third variable and $F(u,v,u+v,0,v,u)\leq 0$, for $u,v\geq 0$ implies that there exists $h\in [0,1)$ such that $u\leq hv.$	
	\item[($f_2$)] $F(u,u,0,0,u,u)>0$, for all $u>0$.\\\end{itemize}

 Observe that $\mathcal{F}\subseteq\mathfrak{F}$.

\begin{definition}
	If $\rho$ is a comparison function such that $\varphi$ defined by: $\varphi(t)=\rho(2t)$, for $t\geq0,$ is a comparison function, then $\rho$ is called  a half-comparison function.
\end{definition}

For example, the comparison function $\rho(t)=kt$ is a half-comparison for $k\in[0,1/2)$ while it is not half-comparison for $k\in[1/2,1)$.

\begin{proposition}\label{prphalf}
	Let $\rho$ be a half-comparison function. Then $\rho$ is a comparison function with $\rho(2t)<t$, for all $t>0.$
\end{proposition}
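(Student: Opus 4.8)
The plan is to read off both assertions directly from the definition of a half-comparison function together with the elementary properties of comparison functions already established in Lemma \ref{samtur}. The first conclusion, that $\rho$ is a comparison function, is literally the first clause in the definition of ``half-comparison function'', so there is nothing to establish; I would merely record this for completeness before turning to the substantive part.

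For the inequality $\rho(2t)<t$, the key point is that the rescaled function $\varphi$ given by $\varphi(t)=\rho(2t)$ is, by hypothesis, itself a comparison function. Consequently Lemma \ref{samtur}(i) applies to $\varphi$ (not merely to $\rho$), giving $\varphi(t)<t$ for every $t>0$; unravelling the definition of $\varphi$, this is precisely $\rho(2t)<t$ for every $t>0$, which is what we want.

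I do not expect any real obstacle here: the entire content is the observation that the strict-decrease property $\varphi(t)<t$ furnished by Lemma \ref{samtur} is valid for \emph{every} comparison function, in particular for the function $t\mapsto\rho(2t)$ built into the very definition of a half-comparison function. The only point requiring a little care is to keep the roles of $\rho$ and of $\varphi$ distinct when citing the lemma, so that it is applied to the correct function.
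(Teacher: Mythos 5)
Your proposal is correct and follows exactly the same route as the paper: both note that $\rho$ being a comparison function is part of the definition, and both obtain $\rho(2t)=\varphi(t)<t$ by applying the strict inequality of Lemma \ref{samtur}(i) to the comparison function $\varphi(t)=\rho(2t)$. Your added emphasis on applying the lemma to $\varphi$ rather than to $\rho$ is a useful clarification but does not change the argument.
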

\begin{proof}
	As $\rho$ is a half-comparison function, there exists a comparison function $\varphi$ such that $\varphi(t)=\rho(2t)$. Thus,
	$\rho(2t)=\varphi(t)<t,\text{ for all }t>0$
\end{proof}

The following functions satisfy variety of the conditions $F_{1a}-F_2$. In all the following examples, $\psi$ is a continuous comparison function while $\rho$ is a continuous half-comparison function.
\begin{example}
All functions $F$ defined in Examples 3.1-3.8, 3.17 and 3.19 of \cite{berinde2012approximating} are in $\mathfrak{F}$ and 
satisfy conditions $F_{1a}-F_2$ for $\varphi(t)=kt$ with a suitable $k$.
\end{example}
\begin{example}\cite[Example 2]{berinde2012common}\label{lin.quasi}
	Consider the function  $F\in\mathfrak{F}$, given by:
	$$F(t_1,t_2,t_3,t_4,t_5,t_6)=t_1-k \big(max\left\{t_2,t_3,t_4,t_5,t_6\right\}\big),k\in[0,1/2),$$
then $F$ satisfies $F_{1a}-F_{2}$ with $\varphi(t)=\frac{kt}{1-k}$.
\end{example}
\begin{example}\label{nonlin.quasi}
	Define  $F\in\mathfrak{F}$ given by:
	$$F(t_1,t_2,t_3,t_4,t_5,t_6)=t_1-\rho\big(max\left\{t_2,t_3,t_4,t_5,t_6\right\}\big).$$
	
\vspace{.3cm}
Let $u,v\geq0$ and choose a comparison function $\varphi$ where $\varphi(t)=\rho(2t),\forall~t\in[0,\infty).$ Assume $u\geq v$ so that
	$u- \rho(u+v)\leq0$ $\Rightarrow u\leq \rho(2u)=\varphi(u)$ which is a contradiction. Thus, $u\leq v$ and $u\leq \rho(u+v)\leq \varphi(v).$ Therefore, $F$ satisfies $F_{1a}$ with $\varphi$ given by $\varphi(t)=\rho(2t),t>0$.
	
	Similarly, we can prove that $F$ satisfies $F_{1b},$ $F_{1c}$ and $F_2$ for the same $\varphi$.
\begin{remark} $F$ defined in Example \ref{lin.quasi} is a special case of $F$ defined in Example \ref{nonlin.quasi}.
	\end{remark}
\end{example}

\begin{example}\label{1.10.of.V.and.V}
	Define  $F\in\mathfrak{F}$ as:
	$$F(t_1,t_2,t_3,t_4,t_5,t_6)=t_1-\psi \bigg(t_3\frac{t_5+t_6}{t_2+t_4}\bigg).$$ Then  $F$ satisfies $F_{1a}$ and $F_2$ with $\varphi=\psi$ but does not satisfy $F_{1b}$ and $F_{1c}$.
\end{example}

\begin{example}\label{1a,1b.not.1c.and.2}
	Define  $F\in\mathfrak{F}$ as:
	$$F(t_1,t_2,t_3,t_4,t_5,t_6)=t_1-\psi\bigg(t_2\frac{t_5+t_6}{t_3+t_4}\bigg).$$ Then  $F$ satisfies $F_{1a}$ and $F_{1c}$ with $\varphi=\psi$, while $F_2$ is not applicable.
\end{example}

\begin{example}
	Define  $F\in\mathfrak{F}$ as:
	$$F(t_1,t_2,t_3,t_4,t_5,t_6)=t_1-\psi(t_2).$$ Then  $F$ satisfies $F_{1a}-F_2$ with $\varphi=\psi$.
\end{example}
\begin{example}\label{not.1c.not2n}
	Define  $F\in\mathfrak{F}$ as:
	$$F(t_1,t_2,t_3,t_4,t_5,t_6)=t_1-\rho(t_3+t_4).$$ Then,  $F$ satisfies $F_{1a}-F_{2}$ with $\varphi$ given by $\varphi(t)=\rho(2t),t>0$.
	\noindent Observe that, if we replace $\rho$ by $\psi$ in this example, then $F$ satisfies condition $F_2$ only.
\end{example}
\begin{example}\label{not.1c.not2}
	Define  $F\in\mathfrak{F}$ as:
	$$F(t_1,t_2,t_3,t_4,t_5,t_6)=t_1-\rho(t_2+t_3).$$ Then,  $F$ satisfies $F_{1a},F_{1b}$ and $F_{2}$ with $\varphi$ given by $\varphi(t)=\rho(2t),t>0$.
\end{example}
	\noindent Observe that, if we replace $\rho$ by $\psi$ in this example, then $F$ satisfies conditions $F_{1b}$ and $F_2$.

\begin{example}
	Define  $F\in\mathfrak{F}$ as:
	$$F(t_1,t_2,t_3,t_4,t_5,t_6)=t_1-\psi\bigg(max\left\{t_2,\frac{t_3+t_4}{2},t_5,t_6\right\}\bigg).$$ Then,  $F$ satisfies $F_{1b}-F_2$ with $\varphi=\psi$.
	
\end{example}

\begin{example}
	Define  $F\in\mathfrak{F}$ as:
	$$F(t_1,t_2,t_3,t_4,t_5,t_6)=t_1-\psi \bigg(max\left\{t_2,t_3,t_4,\frac{t_5+t_6}{2}\right\}\bigg).$$ Then,  $F$ satisfies $F_{1a}$ and $F_2$ with $\varphi=\psi$.
\end{example}
\begin{example}
	Define  $F\in\mathfrak{F}$ as:
	$$F(t_1,t_2,t_3,t_4,t_5,t_6)=t_1-\psi \bigg(max\left\{t_2,t_3,t_4,\frac{t_5+t_6}{2}\right\}\bigg)-L min \{t_3,t_4,t_5,t_6\},L\geq0.$$ Then,  $F$ satisfies $F_{1a}$ and $F_2$ with $\varphi=\psi$.
\end{example}



\begin{example}\label{mine}
	Define  $F\in\mathfrak{F}$ as: $$F(t_1,t_2,t_3,t_4,t_6)=t_1-\psi\bigg(max\left\{t_2,\frac{1}{2}[t_3+t_4],\frac{1}{2}[t_5+t_6]\right\}\bigg).$$ Then
$F$ satisfies $F_{1a}-F_{2}$ with $\varphi=\psi$.\\
\end{example}
\begin{example}
	Define  $F\in\mathfrak{F}$ as:
	$$F(t_1,t_2,t_3,t_4,t_5,t_6)=t_1-\psi\bigg(max\left\{t_2,t_3,\frac{t_4}{2},\frac{t_5+t_6}{2},t_6\right\}\bigg).$$ Then,  $F$ satisfies $F_{1a},F_{1b}$ and $F_2$ with $\varphi=\psi$.\\
\end{example}	
	
	

The following theorem is essentially contained in Berinde and Vetro \cite{berinde2012common}:

\begin{theorem} \label{berinde-vetro}\cite[Theorem 2]{berinde2012common} Let $(X,d,\preceq)$ be a complete ordered metric space and $(T,S)$ a pair of self-mappings on $X$ such that $T(X)\subseteq S(X)$ and $T$ is $S$-increasing. Assume that there exists a function $F \in \mathcal{F}$ satisfying $f_{1a},$ such that for all $x,y\in X$ with $Sx\preceq Sy$,
	\begin{equation}\label{rr}
	F(d(Tx,Ty),d(Sx,Sy),d(Sx,Tx),d(Sy,Ty),d(Sx,Ty),d(Sy,Tx))\leq0.
	\end{equation}
	If following conditions hold:
	\begin{description}
		\item[($a_1$)] there exists $x_{0}\in X$ such that $Sx_0\preceq Tx_0$,
		\item[($a_2$)] for every increasing sequence
		$\{Sx_n\}$ in $X$ converges to $Sx$, we have $Sx_n\preceq Sx,\;\;\forall~ n\in \mathbb{N}_0 \text{ and } Sx\preceq S(Sx)$.
	\end{description}
	then, the pair  $(T,S)$ has a coincidence point in $X$. Moreover, if
	\begin{description}
		\item[($a_3$)] $(T,s)$ is weakly compatible pair,
		\item[($a_4$)] $F$ satisfies $f_2$,
	\end{description}
	then, the pair  $(T,S)$ has a common fixed point. Further for any $x_0\in X$, the $T$-$S$-sequence $\{Tx_n\}$ (with initial point $x_0$) converges to a common fixed point of the pair.
\end{theorem}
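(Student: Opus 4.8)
The plan is to run the Jungck-type Picard iteration in the ordered, implicit-function setting. Since $T(X)\subseteq S(X)$, starting from the point $x_0$ supplied by $(a_1)$ one can choose $x_1,x_2,\dots$ with $Sx_{n+1}=Tx_n$ for all $n$, so that $\{Tx_n\}$ is exactly the $T$-$S$-sequence with initial point $x_0$. From $Sx_0\preceq Tx_0=Sx_1$ and the $S$-increasing property of $T$, an induction gives $Sx_n\preceq Sx_{n+1}$ for every $n$; hence $\{Sx_n\}$ is increasing and, in particular, any two of its terms are comparable, so the contractive inequality \eqref{rr} is available for every pair $(x_m,x_n)$. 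If $Sx_{n_0}=Sx_{n_0+1}$ for some $n_0$ then $x_{n_0}$ is already a coincidence point and we are done, so assume $d(Sx_n,Sx_{n+1})>0$ for all $n$.

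I expect the Cauchy estimate to be the computational core. Apply \eqref{rr} to the comparable pair $(x_n,x_{n+1})$ and evaluate the six arguments: writing $\delta_n:=d(Sx_n,Sx_{n+1})=d(Tx_{n-1},Tx_n)$, they are $\delta_{n+1},\ \delta_n,\ \delta_n,\ \delta_{n+1},\ d(Sx_n,Tx_{n+1}),\ 0$, where the sixth vanishes because $Tx_n=Sx_{n+1}$, and the triangle inequality gives $d(Sx_n,Tx_{n+1})\le\delta_n+\delta_{n+1}$. Since $F$ is decreasing in the fifth variable one may enlarge the fifth argument to $\delta_n+\delta_{n+1}$ while keeping the inequality $\le0$, so $F(\delta_{n+1},\delta_n,\delta_n,\delta_{n+1},\delta_n+\delta_{n+1},0)\le0$; then $f_{1a}$ yields $h\in[0,1)$ with $\delta_{n+1}\le h\,\delta_n$, whence $\delta_n\le h^{\,n-1}\delta_1$ and $\{Tx_n\}=\{Sx_{n+1}\}$ is a Cauchy sequence. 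By completeness of $X$ there is $z\in X$ with $Tx_n\to z$ and $Sx_{n+1}\to z$.

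For the coincidence point I would feed the increasing sequence $\{Sx_n\}\to z$ into $(a_2)$: this provides $w\in X$ with $z=Sw$, with $Sx_n\preceq Sw$ for all $n$, and with $Sw\preceq S(Sw)$. Applying \eqref{rr} to the comparable pair $(x_n,w)$, letting $n\to\infty$, and using continuity of $F$ together with $Sx_n\to z$, $Tx_n\to z$ and $Sw=z$, one obtains $F\big(d(z,Tw),0,0,d(z,Tw),d(z,Tw),0\big)\le0$, which is the instance of $f_{1a}$ with $v=0$ and therefore forces $d(z,Tw)\le h\cdot0=0$; hence $Tw=Sw=z$ and $w\in C(T,S)$. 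For the ``moreover'' part, weak compatibility $(a_3)$ gives $Tz=T(Sw)=S(Tw)=Sz$ (equivalently, invoke Lemma \ref{prop2}), and from $Sw\preceq S(Sw)$ we get the comparable pair $(w,z)$ with $Sw\preceq Sz$; substituting it in \eqref{rr}, the six arguments collapse to $c,c,0,0,c,c$ with $c:=d(z,Tz)$, so $F(c,c,0,0,c,c)\le0$, which by $f_2$ (condition $(a_4)$) is impossible unless $c=0$. Thus $Sz=Tz=z$ is a common fixed point, and by construction the $T$-$S$-sequence $\{Tx_n\}$ converges to it.

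The delicate points — and, I suspect, the source of the inaccuracy mentioned in the introduction — are two. First, passing from $Tx_n\to z$ to $z\in S(X)$ is not justified by completeness of $X$ alone; in the argument above it is read off from the phrasing of $(a_2)$, which presupposes that the limit has the form $Sx$, so strictly one should either state this or add that $S(X)$ is closed. Second, the concluding clause that \emph{for any} $x_0\in X$ the $T$-$S$-sequence converges to a common fixed point cannot follow from the listed hypotheses: if $Sx_0$ and $Tx_0=Sx_1$ happen to be incomparable, then $\{Sx_n\}$ need not be monotone and the entire monotonicity-driven scheme breaks down. To salvage that clause one would need an additional ingredient — for instance $T$-directedness of $(X,\preceq)$, so that orbits from different starting points can be compared, or uniqueness of the common fixed point — and I would regard isolating and using such a hypothesis as the main obstacle to a correct proof of the statement exactly as written.
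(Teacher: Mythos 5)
Your proof cannot be completed, and the ``delicate point'' you flag at the end is not a cosmetic issue but a fatal one: this statement is quoted from Berinde--Vetro precisely so that the paper can \emph{refute} it, and the paper offers no proof of it at all. The gap is exactly where you located it. Completeness of $X$ gives a limit $z$ of the Cauchy sequence $\{Tx_n\}=\{Sx_{n+1}\}$, but nothing in the hypotheses puts $z$ into $S(X)$; condition $(a_2)$ is phrased only for increasing sequences whose limit \emph{already} has the form $Sx$, so it is vacuous when $z\notin S(X)$ and cannot manufacture the point $w$ with $Sw=z$ that your argument needs. The paper's counterexample realizes this failure concretely: take $X=\{0\}\cup\{-(1/4)^i : i\ge 1\}$ with the usual metric and order, $Tx_i=x_{i+2}$, $Sx_i=x_{i+1}$, and $F(t_1,\dots,t_6)=t_1-kt_2$ with $k\in[\tfrac14,1)$. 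Here $X$ is complete, $T(X)\subseteq S(X)$, $T$ is $S$-increasing, \eqref{rr} and $(a_1)$--$(a_4)$ all hold, yet $(T,S)$ has no coincidence point, because the iterates converge to $0$, which lies outside $S(X)$. So no amount of care in the remaining steps can close the argument.

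Everything else you wrote is sound and is in fact how the paper proves its corrected version, Theorem \ref{upperE2}: your Cauchy estimate via $f_{1a}$ (the paper runs the same computation with a comparison function $\varphi$ in place of the constant $h$), your passage to the coincidence point using regularity and continuity of $F$, and your use of weak compatibility plus $f_2$ to upgrade to a common fixed point are Steps 1--3 of that proof almost verbatim. The repair the paper adopts is the one you hint at: demand that some subspace $E$ with $T(X)\subseteq E\subseteq S(X)$ be ($\overline{\rm O}$-)complete, which forces the limit into $S(X)$, and replace $(a_2)$ by I-regularity of $E$. Your second worry, about the clause ``for any $x_0\in X$,'' is also legitimate and is handled in the corrected statement by restricting to ``any \emph{such} $x_0$,'' i.e.\ to initial points satisfying $Sx_0\preceq Tx_0$. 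In short: you correctly diagnosed both defects, but you should conclude that the theorem as stated is false rather than present a proof of it with caveats.
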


\vspace{.3cm}
The authors in \cite{berinde2012common}, also, gave the following sufficient conditions for the uniqueness of the common fixed point in above theorem:
\begin{description}
	\item[($a_5$)] for all $x,y\in S(X)$, there exists $v\in X$ such that $Sv \preceq x,~Sv\preceq y$,
	\item[($a_6$)]$F$ satisfies $f_{1c}$.
\end{description}

\vspace{.3cm} The objective of this paper is to prove common fixed point results for a self-mappings satisfying an implicit function which is general enough to cover several linear as well as some nonlinear contractions. The main results of this paper are based on the following motivations and observations.

\begin{enumerate} [$(i)$]
	\item To provide an example which shows that Theorem \mbox{\ref{berinde-vetro}} is not correct in the present form.
    \item To modify Theorem \mbox{\ref{berinde-vetro}}, we employ the completeness of any subspace $E$ (such that $T(X) \subseteq E\subseteq S(X)$) rather than the completeness of whole space $X$. This point is very vital and also responsible for the failure of Theorem \ref{berinde-vetro}.
	\item To enrich Theorem \mbox{\ref{berinde-vetro}}, we consider a relatively larger class of implicit functions which also cover some nonlinear contraction besides weakening some earlier metrical notions such as: completeness and continuity.
	\item To improve Theorem \ref{berinde-vetro}, the condition $(a_2)$  is replaced by relatively weaker notion of I-regularity.
	\item To prove a fixed point theorem under a relatively weaker condition of Dane\v{s}-type
	(see \mbox{\cite[Definition 1]{danevs1976two}}) which can be viewed as an order-theoretic version of a famous theorem due to \'{C}iri\'{c}\mbox{\cite[Theorem 1]{ciric1974generalization}} for quasi contraction.
	\item To prove a sharper version of Theorem 1 due to Berinde and Vetro \mbox{\cite{berinde2012common}} in the metric setting.
\end{enumerate}

\section{Results on Ordered Metric Spaces}\label{2.2}

 Firstly, we utilize the following example which exhibits that Theorem \mbox{\ref{berinde-vetro}} is not correct in its present form.

\begin{example} Consider $X=\{x_0,x_1,x_2,...,x_n,...\}$ where $x_0=0,x_i=-(\frac{1}{4})^i,i=\{1,2,3,...\}$ with usual metric and usual order. Then, $(X,d,\leq)$ is an ordered metric space. Define two self-mappings $T$ and $S$ on $X$ by: $$T(x_i)=x_{i+2} \text{ and } S(x_i)=x_{i+1}, \text{ for all } i.$$ Consider the function $F\in\mathcal{F}$ defined by \cite[Example 3.1]{berinde2012approximating}:
$$F(t_1,t_2,t_3,t_4,t_5,t_6)=t_1-k t_2, \text{ where } k\in[0,1).$$
With a view to verify  assumption (\ref{rr}) of Theorem \ref{berinde-vetro}, consider $x_i,x_j$ in $X$ with $i<j$ so that

$$\big|Tx_j-Tx_i\big|\leq k\big|Sx_j-Sx_i\big|,$$
$i.e.,$
$$\Big(\frac{1}{4}\Big)^{i+2}-\Big(\frac{1}{4}\Big)^{j+2}\leq k\Big[\Big(\frac{1}{4}\Big)^{i+1}-\Big(\frac{1}{4}\Big)^{j+1}\Big],$$
or
$$\Big(\frac{1}{4}\Big)^{i+2}\Big[1-\Big(\frac{1}{4}\Big)^{j-i}\Big]\leq k\Big(\frac{1}{4}\Big)^{i+1}\Big[1-\Big(\frac{1}{4}\Big)^{j-i}\Big],$$
which means
$$\frac{1}{4}\leq k.$$
\indent Hence, $F$ satisfies $f_{1a}, f_{1c}$ and $f_2$  for $k\in [\frac{1}{4},1)$. Also, all other assumptions of Theorem \ref{berinde-vetro} are satisfied. Observe that the pair  $(T,S)$ has no common fixed point. In fact they do not admit even a coincidence point.

\end{example}

With a view to correct and enrich Theorem \mbox{\ref{berinde-vetro}}, we frame the following definition:

\begin{definition}\label{xg-icu} 
	Let $(X,d,\preceq)$ be an ordered
	metric space and $S$ a self-mapping on $X.$ We say that $(X,d,\preceq)$ is
	\begin{enumerate}
		\item[{$(i)$}]  I-regular if every increasing sequence
		$\{Sx_n\}$ in $X$ converges to $Sx$, admits a subsequence $\{Sx_{n_k}\}$ such that each term of $\{Sx_{n_k}\}$ is comparable with $Sx$ and $Sx\preceq S(Sx)$.
		\item[{$(ii)$}]  D-regular if every decreasing sequence
		$\{Sx_n\}$ in $X$ converges to $Sx$, admits a subsequence $\{Sx_{n_k}\}$ such that each term of $\{Sx_{n_k}\}$ is comparable with $Sx$ and $Sx\succeq S(Sx)$.
		\item[{$(iii)$}]  M-regular if it is both I-regular and D-regular.
	\end{enumerate}
\end{definition}

Now we are equipped to prove our main result as follows:
\begin{theorem}\label{upperE2} Let $(X,d,\preceq)$ be an ordered metric space and $E$ an $\overline{\rm O}$-complete subspace of $X$. Let $(T,S)$ be a pair of self-mappings on $X$ such that $T(X)\subseteq E\subseteq S(X)$ and $T$ is $S$-increasing. Assume that there exists a function $F \in \mathfrak{F}$ satisfying $F_{1a}$ such that, for all $x,y\in X$ (with $Sx\preceq Sy$),
\begin{equation}\label{rr5tr}
F(d(Tx,Ty),d(Sx,Sy),d(Sx,Tx),d(Sy,Ty),d(Sx,Ty),d(Sy,Tx))\leq0.
\end{equation}
If following conditions hold:
\begin{description}
         \item[($b_1$)] there exists $x_{0}\in X$ such that $Sx_0\preceq Tx_0$,
         \item[($b_2$)]$(E,d,\preceq)$ is I-regular,
\end{description}
\noindent then, the pair $(T,S)$ has a coincidence point in $X$. Also, if
\begin{description}
	           \item[($b_3$)] $F$ satisfies $F_2$,
  \item[($b_4$)] $(T,S)$ is weakly compatible pair,
\end{description}
then, the pair  $(T,S)$ has a common fixed point. Moreover, if
        \begin{description}
        \item[($b_5$)] $C(T,S)$ is $(T,S)$-directed,
        \item[($b_6$)] $F$ satisfies $F_{1c}$,
                 \end{description}
\noindent then the common fixed point is unique. Further for any such $x_0$ in $X$, the $T$-$S$-sequence $\{Tx_n\}$ (with initial point $x_0$) converges to the unique common fixed point of the pair.
 \end{theorem}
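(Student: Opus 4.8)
The plan is to run the Picard–Jungck iteration for the pair $(T,S)$ inside the subspace $E$ and split the argument into the three assertions. First, fix $x_0$ as in $(b_1)$; since $T(X)\subseteq S(X)$ one can build a $T$-$S$-sequence $\{x_n\}$ with $Tx_n=Sx_{n+1}$. From $Sx_0\preceq Tx_0=Sx_1$ and the $S$-increasing property, induction shows $\{Sx_n\}$ (hence $\{Tx_n\}$) is increasing. Put $\delta_n=d(Sx_n,Sx_{n+1})$ and apply \eqref{rr5tr} to the comparable pair $(x_{n-1},x_n)$: the six arguments are $\bigl(\delta_n,\delta_{n-1},\delta_{n-1},\delta_n,d(Sx_{n-1},Sx_{n+1}),0\bigr)$, and since $d(Sx_{n-1},Sx_{n+1})\le\delta_{n-1}+\delta_n$ and $F$ is decreasing in its fifth variable, $F(\delta_n,\delta_{n-1},\delta_{n-1},\delta_n,\delta_{n-1}+\delta_n,0)\le 0$; hence $(F_{1a})$ gives $\delta_n\le\varphi(\delta_{n-1})$, so $\delta_n\le\varphi^{n}(\delta_0)\to 0$ by Definition \ref{defcompa}. (If some $\delta_N=0$ then $x_N\in C(T,S)$ and we stop, so assume all $\delta_n>0$.)

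The technical heart is Cauchyness of $\{Tx_n\}$. I would argue by contradiction: if it fails, pick $\varepsilon>0$ and indices $n_k<m_k$ with $d(Sx_{n_k},Sx_{m_k})>\varepsilon\ge d(Sx_{n_k},Sx_{m_k-1})$; using $\delta_n\to 0$ and the triangle inequality, $d(Sx_{n_k},Sx_{m_k})$ and the shifted distances $d(Sx_{n_k\pm1},Sx_{m_k\pm1})$ all tend to $\varepsilon$. Feeding the comparable pair $(x_{n_k-1},x_{m_k-1})$ into \eqref{rr5tr} and letting $k\to\infty$ with $F$ continuous yields $F(\varepsilon,\varepsilon,0,0,\varepsilon,\varepsilon)\le 0$, contradicting $(F_2)$. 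Thus $\{Tx_n\}$ is an increasing Cauchy sequence lying in $T(X)\subseteq E$, so by $\overline{\rm O}$-completeness of $E$ it converges to some $z\in E\subseteq S(X)$; writing $z=Su$, we get $Sx_n\to Su$ and $Tx_n\to Su$. Now $(b_2)$ provides a subsequence $\{Sx_{n_k}\}$ with every term comparable to $Su$ and $Su\preceq S(Su)$; passing to a further subsequence with a fixed comparability direction and inserting $(x_{n_k},u)$ (resp.\ $(u,x_{n_k})$) into \eqref{rr5tr}, then letting $k\to\infty$ with $Tx_{n_k}\to Su$, $\delta_{n_k}\to 0$ and $F$ continuous, the limiting inequality collapses—through $(F_{1a})$ and $\varphi(0)=0$ (Lemma \ref{samtur})—to $d(Su,Tu)\le\varphi(0)=0$, so $Tu=Su$, i.e.\ $u\in C(T,S)$.

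For the common fixed point, set $z=Su=Tu$. By weak compatibility and Lemma \ref{prop2}, $z\in C(T,S)$, i.e.\ $Sz=Tz$; and $(b_2)$ already gave $z=Su\preceq S(Su)=Sz$. Apply \eqref{rr5tr} to the comparable pair $(u,z)$: with $Tu=Su=z$ and $Tz=Sz$ the arguments reduce to $\bigl(s,s,0,0,s,s\bigr)$, $s=d(z,Sz)$, so $F(s,s,0,0,s,s)\le 0$, which $(F_2)$ forbids unless $s=0$; hence $Sz=Tz=z$ is a common fixed point and $\{Tx_n\}$ converges to it. For uniqueness let $z_1,z_2$ be common fixed points, so $z_i\in C(T,S)$; by $(b_5)$ pick $w$ with $z_1=Tz_1\prec\succ Sw$, $z_2=Tz_2\prec\succ Sw$, and build the $T$-$S$-sequence $\{w_n\}$ from $w_0=w$. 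Using that $T$ is $S$-increasing and $Sz_i=z_i=Tz_i$, the comparability of $z_i$ with $Sw_n$ propagates for all $n$ in a fixed direction; in the direction $Sw_n\preceq z_i$, inserting $(w_n,z_i)$ into \eqref{rr5tr} and writing $\gamma_n=d(Sw_n,z_i)$ gives arguments $\bigl(\gamma_{n+1},\gamma_n,d(Sw_n,Sw_{n+1}),0,\gamma_n,\gamma_{n+1}\bigr)$ with $d(Sw_n,Sw_{n+1})\le\gamma_n+\gamma_{n+1}$, so $(F_{1c})$—whose monotonicity in the third variable is exactly what is needed—yields $\gamma_{n+1}\le\varphi(\gamma_n)$, whence $Sw_n\to z_i$; uniqueness of metric limits forces $z_1=z_2$, and any $x_0$ with $Sx_0\preceq Tx_0$ then produces this same point, with the associated $T$-$S$-sequence converging to it.

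I expect the Cauchy step to be the main obstacle. For a genuinely nonlinear comparison function the estimate $\delta_n\le\varphi(\delta_{n-1})$ does \emph{not} by itself make $\{Tx_n\}$ Cauchy: $\sum\varphi^{n}(\delta_0)$ can diverge, and $(F_{1a})$ cannot be upgraded to an all-pairs bound $d(Tx_n,Tx_m)\le\varphi(d(Sx_n,Sx_m))$ that would license a Matkowski-type invariant-ball argument; this is precisely why one is pushed into the reductio above and must lean on continuity of $F$ together with $(F_2)$. A persistent secondary nuisance is that $(b_2)$ and $(b_5)$ guarantee comparability only along a subsequence and in an a priori unknown direction, so every evaluation of \eqref{rr5tr} against a limit point requires a short case split on the direction, plus a check that—after exploiting the prescribed monotonicity of $F$—the correct member of $\{F_{1a},F_{1c},F_2\}$ closes the case; Lemma \ref{prop1} is convenient here to dismiss the degenerate subcases where certain $S$-values coincide.
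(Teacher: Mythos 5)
Your overall architecture coincides with the paper's: the same $T$-$S$-iteration made increasing via the $S$-increasing property, the same consecutive estimate $\delta_n\le\varphi(\delta_{n-1})$ from $F_{1a}$, the same use of I-regularity to pass to the limit and extract a coincidence point, the same substitution of the pair $(u,z)$ with $Su\preceq S(Su)$ and $(F_2)$ to upgrade to a common fixed point, and the same $F_{1c}$-based connecting-sequence argument for uniqueness. The genuine divergence is the Cauchy step, and that is where your proposal has a real gap: you establish Cauchyness by contradiction, extracting subsequences with $d(Sx_{m_k},Sx_{n_k})\to\varepsilon$ and deriving $F(\varepsilon,\varepsilon,0,0,\varepsilon,\varepsilon)\le 0$, which you refute by $(F_2)$. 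But $(F_2)$ is hypothesis $(b_3)$, invoked in the theorem only for the common-fixed-point conclusion; the first assertion (existence of a coincidence point) is claimed under $F_{1a}$, $(b_1)$ and $(b_2)$ alone, so your argument does not prove it under the stated hypotheses. The paper instead runs a Matkowski-type $\varepsilon-\varphi(\varepsilon)$ invariant-ball induction: choose $n$ with $d(Tx_{n+1},Tx_n)<\varepsilon-\varphi(\varepsilon)$ and show $d(Tx_{n+k},Tx_n)<\varepsilon$ for all $k$ using only $F_{1a}$. Your scepticism about that step is in fact well placed --- the estimate $\delta_n\le\varphi(\delta_{n-1})$ alone cannot give Cauchyness (take $\varphi(t)=t/(1+t)$, so $\varphi^n(t)=t/(1+nt)$ and $\sum\varphi^n(\delta_0)$ diverges), and the paper's induction quietly uses a bound of the form $d(Tx_{n+k+1},Tx_{n+1})\le\varphi(d(Tx_{n+k},Tx_n))$ between non-consecutive iterates, which $F_{1a}$ does not deliver either --- but the repair you propose imports a hypothesis the first conclusion does not grant you.

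A secondary issue: in your Step 2 you rightly note that I-regularity gives comparability of $Sx_{n_k}$ with $Su$ only in an a priori unknown direction, but in the direction $Su\preceq Sx_{n_k}$ the limiting inequality is $F(u,0,u,0,0,u)\le 0$ with $u=d(Su,Tu)$, which is the $F_{1c}$ pattern; $F_{1c}$ is likewise only assumed in $(b_6)$ for uniqueness, so that branch of your case split cannot be closed at that stage. (The paper tacitly takes $Sx_{n_k}\preceq Sx$ and avoids the issue.) The remaining steps --- common fixed point via weak compatibility, $Su\preceq Sz$ and $(F_2)$, and uniqueness via $(b_5)$, the third-variable monotonicity and $F_{1c}$ --- are correct and essentially identical to the paper's.
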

\begin{proof}
The proof is divided into three steps as follows:

\vspace{.3cm}
{\noindent{\bf Step 1.}}  For $x_0$ with $Sx_0\preceq Tx_0$, we can construct a $T$-$S$-sequence $\{Tx_n\}$ with initial point $x_0$ satisfying
\begin{equation*}\label{maineqt}
  Sx_0\preceq Tx_0=Sx_1\preceq Tx_1= Sx_2\preceq ... =Sx_n\preceq Tx_n=Sx_{n+1}\preceq Tx_{n+1} ... .
\end{equation*}
Clearly, $\{Sx_n\},\{Tx_n\}\subset T(X)\subseteq E$. Moreover, both the sequences are increasing sequences. If $Tx_m=Tx_{m+1}$ for some $m\in \mathbb{N}$, then $x_{m+1}$ is the required coincidence point and we are through. Henceforth, we assume  that $Tx_n\ne Tx_{n+1}$ for all $n\in \mathbb{N}$.
As $Sx_n\preceq Sx_{n+1}$, we can take $x=x_n$ and $y=x_{n+1}$ in (\ref{rr5tr}) so that
\begin{multline*}
F(d(Tx_n,Tx_{n+1}),d(Tx_{n-1},Tx_{n}),d(Tx_{n-1},Tx_{n}),d(Tx_{n},Tx_{n+1}),\\d(Tx_{n-1},Tx_{n+1}),d(Tx_{n},Tx_{n}))\leq0.
\end{multline*}
Since $F$ is decreasing in the fifth variable, on using the triangular inequality, above inequality become
\begin{multline*}F(d(Tx_n,Tx_{n+1}),d(Tx_{n-1},Tx_n),d(Tx_{n-1},Tx_n),d(Tx_n,Tx_{n+1}),\\d(Tx_n,Tx_{n+1})+d(Tx_{n-1},Tx_n),0)\leq0.\end{multline*}
Thus, there exists a comparison function  $\varphi$ such that
\begin{eqnarray}
\label{pphidecr}d(Tx_n,Tx_{n+1})&\leq& \varphi (d(Tx_{n-1},Tx_n)).
\end{eqnarray}
 Since $\varphi $ is increasing function, on using induction on $n$ in (\ref{pphidecr}), we get
\begin{eqnarray*}\label{zmv}
\label{phidecrr}d(Tx_n,Tx_{n+1})&\leq& \varphi^n (d(Tx_0,Tx_1)) \text{, for all } n\in \mathbb{N}_0.
\end{eqnarray*}

\noindent Let $\epsilon$ be fixed. Choose $n\in \mathbb{N}_0$ so that $$d(Tx_{n+1},Tx_n)< \epsilon - \varphi (\varepsilon).$$
Now,
\begin{eqnarray*}
	d(Tx_{n+2},Tx_n)&\leq& d(Tx_{n+2},Tx_{n+1})+ d(Tx_{n+1},Tx_n)\\
	&<&\varphi(d(Tx_{n+1},Tx_n))+ \epsilon - \varphi (\varepsilon)\\
	&\leq&\varphi(\epsilon - \varphi (\varepsilon))+ \epsilon - \varphi (\varepsilon)\\
	&\leq&\varphi(\epsilon)+ \epsilon- \varphi (\varepsilon)=\varepsilon.
\end{eqnarray*}
Also,
\begin{eqnarray*}
	d(Tx_{n+3},Tx_n)&\leq& d(Tx_{n+3},Tx_{n+1})+ d(Tx_{n+1},Tx_n)\\
	&<&\varphi(d(Tx_{n+2},Tx_n)+ \epsilon - \varphi (\varepsilon)\\
	&\leq&\varphi(\epsilon) + \epsilon - \varphi (\varepsilon)=\varepsilon.
\end{eqnarray*}

\vspace{.3cm}
\noindent By induction $$d(Tx_{n+k},Tx_n)<\varepsilon,\text{ for all } k\in \mathbb{N}$$ so that $\{Tx_n\}$ is a Cauchy sequence
 in the  $\overline{\rm O}$-complete subspace $E$. Therefore, there exists some $z \in E \text{ and }x\in X$ such that $z=Sx$ with
 \begin{equation}\label{mmaineqt}
 Tx_n\uparrow Sx\text{ and }Sx_n\uparrow Sx.
 \end{equation}

\vspace{.3cm}
{\noindent{\bf Step 2.}} Since $(E,d,\preceq)$ is I-regular, there exists a subsequence $\{Sx_{n_k}\}$ of $\{Sx_n\}$ such that
\begin{align*}\label{37}
  Sx_{n_k}\prec\succ Sx,~~\forall~k~ \in \mathbb{N}.
\end{align*}
On putting $x=x_{n_k},~y=x$ in (\ref{rr5tr}), one gets
$$F(d(Tx_{n_k},Tx),d(Sx_{n_k},Sx),d(Sx_{n_k},Tx_{n_k}),d(Sx,Tx),d(Sx_{n_k},Tx),d(Sx,Tx_{n_k}))\leq0.$$
As $F$ is a continuous, letting $k\rightarrow \infty$ and using (\ref{mmaineqt}), we get
$$F(d(Sx,Tx),0,0,d(Sx,Tx),d(Sx,Tx),0)\leq0,$$
implying thereby,
$$d(Sx,Tx)\leq \varphi (0)=0$$
so that $Sx=Tx.$

\vspace{.3cm}
{\noindent{\bf Step 3.}}
Since the pair $(T,S)$ is weakly compatible and $Sx=Tx(=z$ for some $z\in X$), we have
\begin{equation}\label{gy=Ty}
  Sz=S(Tx)=T(Sx)=Tz
\end{equation}
By assumption $(b_2)$, $Sx\preceq SSx=Sz$. So, by putting $x=x$ and $y=z$ in (\ref{rr5tr}), we get
$$F(d(Tx,Tz),d(Sx,Sz),d(Sx,Tx),d(Sz,Tz),d(Sx,Tz),d(Sz,Tx))\leq0$$
so that $d(Tx,Tz)=0$ which along with (\ref{gy=Ty}) gives rise $Sz=Tz=Tx=z,$ $i.e.,$ $z$ is a common fixed point of $T$ and $S$.

\vspace{.3cm}
Now, we show that the pair $(T,S)$ has a unique common fixed point in the presence of $(b_5)$ and $(b_6)$. Let $z$ and $y$ be two common fixed points of the pair $(T,S)$. By the $(T,S)$-directedness of $C(T,S)$, there exists some $t_0\in X$ such that $Ty\prec\succ St_0$ and $Tz\prec\succ St_0$. Since $T(X)\subseteq S(X)$ and $T$ is $S$-increasing, we can define a sequence $\{t_n\}\subset
X$ with \begin{equation*} \label{144} St_{n+1}=Tt_n,\end{equation*}
and \begin{equation*} \label{155} Sy\prec\succ St_n
\;\; \forall~ n\in \mathbb{N}_0\end{equation*}
On setting $x=t_n,~y=y$ in (\ref{rr5tr}), 
we have
\begin{eqnarray*}
  0 &\geq& F(d(Tt_{n},Ty),d(St_n,Sy),d(St_n,Tt_n),d(Sy,Ty),d(St_n,Ty),d(Sy,Tt_n)) \\
  &=& F(d(Tt_{n},Ty),d(Tt_{n-1},Ty),d(Tt_{n-1},Tt_n),0,d(Tt_{n-1},Ty),d(Ty,Tt_n)) \\
  &\geq& F\big(d(Tt_{n},Ty),d(Tt_{n-1},Ty),d(Tt_{n-1},Ty)+d(Ty,Tt_n),0,d(Tt_{n-1},Ty),\\
  & &d(Ty,Tt_n)\big)
\end{eqnarray*}
so that
\begin{eqnarray*}
\label{phidecr}d(Tt_{n},Ty)&\leq& \varphi (d(Tt_{n-1},Ty)),
\end{eqnarray*}
for a comparison function $\varphi$.

\vspace{.3cm}
On using argument similar to that in Step 1, we can prove that
$d(Tt_{n},Ty)\leq \varphi^n (d(Tt_0,Ty))$, for all $n\in \mathbb{N},$
which on letting $n\rightarrow \infty$ on both sides, gives rise
 $$d(Tt_{n},Ty)\rightarrow0~as~n\rightarrow\infty.$$


Similarly, we can prove that $$d(Tt_n,Tz)\rightarrow0 ~as~n\rightarrow\infty.$$
 Hence,
$$d(z,y)=d(Tz,Ty)\leq d(Tz,Tt_n)+d(Tt_n,Ty)\rightarrow0,\text{ as } n\rightarrow\infty,$$
which amounts to saying that $z=y$.\\

 From the above proof, it follows that for any $x_0\in X$ satisfying $(b_1)$, the $T$-$S$-sequence $\{Tx_n\}$ (with initial point $x_0$) converges to a unique common fixed point of the pair $(T,S)$.\end{proof}


A comprehension of Theorem \ref{upperE2} and Theorem \ref{berinde-vetro}, We reveals the following facts:
\begin{itemize}
		\item The completeness in Theorem \ref{berinde-vetro} is merely required on any subspace $E$ rather than the whole space $X$ such that $T(X) \subseteq E\subseteq S(X)$. This point is very vital and is also responsible for the failure of the Theorem \ref{berinde-vetro}.
		
		\item The class of the implicit function utilized in Theorem \mbox{\ref{upperE2}}  is relatively larger than the one utilized in Theorem \mbox{\ref{berinde-vetro}}.
		
		
		\item The property embodied in condition $(a_2)$ of Theorem \ref{berinde-vetro} implies the I-regularity (utilized in  Theorem \ref{upperE2}).
		\item The notions on `continuity and completeness' employed in Theorem \mbox{\ref{upperE2}} are relatively weaker than their correspondence notions in Theorem \mbox{\ref{berinde-vetro}}. 	\end{itemize}
	
In fact, we can replace the I-regularity of $(E,d,\preceq)$ together with condition $F_{1c}$  from the hypothesis of Theorem \ref{upperE2} at the cost of the  comparability of one of $T$ and $S$ along with a stronger condition on the set $C(T,S)$, as we shall see in Theorem \ref{upperE2b}.\\

 The following three conditions will be utilized in our forthcoming results:
\begin{description}
  \item[($i$)] $T$ is $(S,{\overline{\rm O}})$-continuous.
  \item[($ii$)] $(T,S)$ is an $\overline{\rm O}$-compatible pair and both $T$ and $S$ are $\overline{\rm O}$-continuous.
  \item[($iii$)] $T$ and $S$ are continuous.\end{description}

\begin{theorem}\label{upperE2b}
Theorem \ref{upperE2} remains true if assumptions ($b_2$), ($b_5$) and ($b_6$) are resp. replaced by the following conditions besides retaining the rest of the hypothesis:
\begin{description}
  \item[($c_2$)] any one of the assumptions ($i$), ($ii$) and ($iii$) is satisfied.
  \item[($c_5$)] $C(T,S)$ is totally ordered set.
  \item[($c_6$)] one of $T$ and $S$ is a comparable mapping.
\end{description}

 \end{theorem}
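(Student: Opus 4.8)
The plan is to run the three-step scheme of the proof of Theorem \ref{upperE2} essentially unchanged and to substitute the new hypotheses only at the two places where $(b_2)$, $(b_5)$, $(b_6)$ entered. \emph{Step 1} — construction of the $T$-$S$-sequence $\{Tx_n\}$ with $Sx_n\preceq Tx_n=Sx_{n+1}$, the estimate $d(Tx_n,Tx_{n+1})\le\varphi^n(d(Tx_0,Tx_1))$ obtained from $F_{1a}$ together with monotonicity in the fifth variable and the triangle inequality, the usual Cauchy argument (choose $n$ with $d(Tx_{n+1},Tx_n)<\varepsilon-\varphi(\varepsilon)$ and induct), and the passage to a limit $z=Sx\in E$ with $Tx_n\uparrow Sx$ and $Sx_n\uparrow Sx$ — uses none of $(b_2),(b_5),(b_6)$ and is imported verbatim.

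The first real change is in \emph{Step 2} (existence of a coincidence point): since I-regularity is discarded there is no longer an order relation between the $Sx_n$ and $Sx$, so one argues topologically according to the three alternatives of $(c_2)$. If (i) holds, $(S,\overline{\rm O})$-continuity of $T$ applied to $Sx_n\uparrow Sx$ gives $Tx_n\to Tx$, and since $Tx_n=Sx_{n+1}\to Sx$, uniqueness of limits yields $Tx=Sx$. If (ii) holds, $\overline{\rm O}$-compatibility applied to $Sx_n\uparrow z$, $Tx_n\uparrow z$ gives $d(S(Tx_n),T(Sx_n))\to 0$, while $\overline{\rm O}$-continuity of $S$ (on $Tx_n\uparrow z$) and of $T$ (on $Sx_n\uparrow z$) gives $S(Tx_n)\to Sz$ and $T(Sx_n)\to Tz$; together $Sz=Tz$. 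If (iii) holds, continuity of $S,T$ implies their $\overline{\rm O}$-continuity (Remark \ref{gocontinuity}) and one is reduced to a variant of (ii); the cleanest route is to first invoke Lemma \ref{lem2} to replace $S$ by a one-one restriction, recasting the pair problem as a genuine single-map problem to which continuity applies directly. In each case we obtain $w\in C(T,S)$ with $Sw=Tw=:p$ (with $w=x$, $p=z$ in case (i)).

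In \emph{Step 3} (common fixed point and uniqueness) the relation $Sx\preceq S(Sx)$ that Theorem \ref{upperE2} extracted from I-regularity is replaced by $(c_5)$ and $(c_6)$. Weak compatibility $(b_4)$ at $w$ gives $S(Tw)=T(Sw)$, i.e. $Sp=Tp$, so $p\in C(T,S)$ as well; $(c_5)$ then makes $w$ and $p$ comparable, and since $T$ and $S$ agree on $C(T,S)$, comparability of $S$ (or of $T$) in $(c_6)$ upgrades $w\prec\succ p$ to $Sw\prec\succ Sp$. Feeding this comparable pair into (\ref{rr5tr}) collapses the first, second, fifth and sixth slots to $d(p,Sp)$ and the third and fourth to $0$, so $F(d(p,Sp),d(p,Sp),0,0,d(p,Sp),d(p,Sp))\le 0$, and $F_2$ $(b_3)$ forces $d(p,Sp)=0$; hence $Sp=Tp=p$, a common fixed point (equal to the limit $z$ in case (i)). The same collapse shows any two common fixed points, being in $C(T,S)$ and hence comparable by $(c_5)$, coincide; this gives uniqueness, and combined with Step 1 it yields the convergence of $\{Tx_n\}$ from any admissible $x_0$ to this common fixed point.

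The main obstacle is precisely this last bookkeeping: one must check that $(c_5)$ (both $w$ and its point of coincidence $p=Sw$ lie in the totally ordered set $C(T,S)$) and $(c_6)$ (passing from comparability in $X$ to comparability of the $S$-images) really do take over the role of I-regularity's order clause, and that in cases (ii) and (iii) the coincidence point produced is the actual limit of $\{Tx_n\}$ rather than merely a related point. Case (iii) deserves separate care, since continuity of $T$ and $S$ alone does not control the iterates $x_n$; the reduction via Lemma \ref{lem2} to a single self-map is the safe way to handle it.
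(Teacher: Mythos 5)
Your proposal is correct and follows essentially the same route as the paper's proof: Step 1 is imported unchanged, Step 2 splits into the same three cases of $(c_2)$ (including the same reduction via Lemma \ref{lem2} for case (iii)), and Step 3 obtains the common fixed point and its uniqueness from weak compatibility, $(c_5)$, $(c_6)$ and $F_2$ exactly as the paper does. If anything you are slightly more careful than the paper, which asserts $Sx\prec\succ Sz$ directly from $(c_5)$ without explicitly invoking $(c_6)$ to pass from comparability of the coincidence points to comparability of their $S$-images.
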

 \begin{proof}  The proof is divided into three steps where step 1 is the same as in the proof of Theorem \ref{upperE2} and hence omitted. The other two steps are discussed separately as follows:

\vspace{.3cm}
{\noindent{\bf Step 2.}} Using the conditions embedded in assumption ($c_2$), the modified form of Step 2 runs as follows:\\
{\noindent{\bf ($i$):}}
Since $T$ is $(S,\overline{\rm O}$)-continuous and $S(x_n)\uparrow Sx$, we have $Tx_n\uparrow Tx$. Now, owing to the uniqueness of the limit and (\ref{mmaineqt}) we have, $Sx=Tx.$ Thus, we are through.\\

{\noindent{\bf ($ii$):}}
In view of (\ref{mmaineqt}) and the $\overline{\rm O}$-continuity of both $T$ and $S$, we have

\begin{equation*}
\label{gSxn=gz}\lim\limits_{n\to\infty} S(Tx_n)=Sz,
\end{equation*}
and
\begin{equation*}
\label{TSxn=Tz}\lim\limits_{n\to\infty} T(Sx_n)=Tz.
\end{equation*}
Now, the $\overline{\rm O}$-compatibility of the pair $(T,S)$ gives rise, $ Sz=Tz$.\\


{\noindent{\bf ($iii$):}} Our proof runs on the lines of the proof of Theorem 1 in \cite{alam2015enriching}. We reproduce it here for convenience of the readers. Since  $T$ and $S$ are $\overline{\rm O}$-continuous, owing to Lemma \ref{lem2}, there exists a subset
$A\subseteq X$ such that $S(A)=S(X)$ and $S: A \rightarrow X$ is
one-one. Without loss of generality, we can choose
$A$ such that $x\in A.$ Now, define $T: S(A) \rightarrow
S(X)$ by
\begin{align} \label{10}T(Sa)=Ta,\;\;\forall\; Sa\in S(A)\; {\rm where}\; a\in A.\end{align}
As $S: A \rightarrow X$ is one-one and $T(X)\subseteq S(X)$, $T$ is
well defined. Since $\{x_n\}\subset X$ and $S(X)=S(A)$,
there exists $\{a_n\}\subset A$ such that $Sx_n=Sa_n\;\forall~
n\in \mathbb{N}_0.$ By using Lemma \ref{prop1}, we get
$Tx_n=Ta_n,\;\forall~ n\in \mathbb{N}_0.$ Therefore, owing
to (\ref{mmaineqt}), we have
\begin{align} \label{11} Sa_n=Ta_n\uparrow Sx.\end{align}
On using (\ref{10}), (\ref{11}) and continuity of $T$, we get
$$Tx=T(Sx)=T(\lim\limits_{n\to\infty} Sa_n)=\lim\limits_{n\to\infty} T(Sa_n)=\lim\limits_{n\to\infty} Ta_n=Sx,$$
$i.e.,$ $Tx=Sx.$

\vspace{.3cm}
{\noindent{\bf Step 3.}}
As the pair $(T,S)$ is weakly compatible, we have
\begin{equation*}\label{comm}
  Sz=S(Tx)=T(Sx)=Tz
\end{equation*}
By the assumption ($c_5$), $Sx\prec\succ Sz$. Now, on setting $x=x$ and $y=z$ in (\ref{rr5tr}), we get
\begin{eqnarray*}0 &\geq& F(d(Tx,Tz),d(Sx,Sz),d(Sx,Tx),d(Sz,Tz),d(Sx,Tz),d(Sz,Tx)) \\   &=&F(d(Tx,Tz),d(Tx,Tz),d(Tx,Tx),d(Tz,Tz),d(Tx,Tz),d(Tz,Tx)).\end{eqnarray*}
Assumption ($F_2$) implies $d(Tz,Tx)=0$ and hence $Sz=Tz=Tx=z$.\\

Now, we show that the common fixed point $z$ is unique. Let $y,z\in X$ be two common fixed points of the pair $(T,S)$. By repeating earlier arguments, we have $Ty=Tz$. Thus, the pair $(T,S)$ has a unique common fixed point. From the proof it follows that, for any such $x_0\in X$ satisfying $(b_1)$, the $T$-$S$-sequence $\{Tx_n\}$ (with initial point $x_0$) converges to a unique common fixed point.
\end{proof}

\begin{remark}
  When the condition $(c_2)$ is satisfied with $(ii)$ in Theorem \ref{upperE2b}, Remark \ref{rem5} implies that $(T,S)$ is weakly compatible pair. Thus, assumption $(b_4)$ is not required  any more in the hypothesis.
\end{remark}

The following example exhibits that Theorem \ref{upperE2} is genuinely different to Theorem \ref{upperE2b}.
\begin{example}\label{113}
Consider $X=(-1,1]$ equipped with usual metric. Then, $(X,d,\preceq)$
is an ordered metric space wherein for $x,y\in X,$ $$x\preceq y\Leftrightarrow (x\leq y, y\neq1) ~{\rm or}~ (x=y=1).$$ Herein '$\leq$' stand of the usual order on $\mathbb{R}$. Set $S=I_X$ and define $T:X\rightarrow X$ by $$Tx=x/3, ~\rm{for~ all}~ x\in X.$$
Consider $F\in \mathfrak{F}$ given in Example \ref{1.10.of.V.and.V} so that $F$ satisfies $F_{1a}$ and $F_2$ for the comparison function $\psi(t)=kt,$ (for some $k\in(0,1)$). Thus, (by taking $E=X$) Theorem \ref{upperE2b} (with assumption ($ii$)) ensures the existence of a unique common fixed point (namely $x=0$).
Observe that, Theorem \ref{rr5tr} is not applicable not only because $F$ does not satisfy condition $F_{1c}$ but also $(X,d,\preceq)$ is not I-regular. It is worth mentioning here that Theorem \mbox{\ref{berinde-vetro}} is not applicable to present example due to the involvement of relatively weaker completeness notion.


\end{example}
Though, the succeeding two theorems are similar (to Theorems \ref{upperE2} and \ref{upperE2b}), yet there do exist instances wherein the following two theorems are applicable but Theorems \ref{rr5tr} and \ref{upperE2b} are not, which substantiate the utility of such theorems.


\begin{theorem}\label{upperE1}
	Let $X,~E,~T$ and $S$ be defined as in Theorems \ref{upperE2}. Assume there exists a function $F \in \mathfrak{F}$ satisfying conditions $F_{1a}$ and (\ref{rr5tr}).
Suppose that the following conditions hold:
\begin{description}
         \item[($d_1$)] there exists $x_{0}\in X$ such that $Sx_0\preceq Tx_0$,
         \item[($d_2$)]$(E,d,\preceq)$ is I-regular,
                 \end{description}
\noindent then the pair $(T,S)$ has a coincidence point in $X$. Also, if
\begin{description}
        \item[($d_3$)] $(C(T,S),\preceq)$ is $(T,S)$-directed,
        \item[($d_4$)] $F$ satisfies $F_{1b}$,
                 \end{description}
then, the pair $(T,S)$  has a unique point of coincidence. And if
\begin{description}
  \item[($d_5$)] one of $T$ and $S$ is one-one,
\end{description}
  then, the pair $(T,S)$ has a unique coincidence point. Moreover if
        \begin{description}
        \item[($d_6$)] $(T,s)$ is weakly compatible pair,
                 \end{description}
\noindent then, the pair  $(T,S)$ has a unique common fixed point. Further, for any $x_0\in X$, the $T$-$S$-sequence $\{Tx_n\}$ (with initial point $x_0$) converges to the unique common fixed point.
 \end{theorem}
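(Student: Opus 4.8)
The plan is to follow the architecture of the proof of Theorem \ref{upperE2} almost verbatim for the existence of a coincidence point, then handle the uniqueness in a different order dictated by the new hypotheses. First I would run Step 1 exactly as before: starting from $x_0$ with $Sx_0 \preceq Tx_0$ and using $T(X)\subseteq S(X)$ together with the fact that $T$ is $S$-increasing, construct the $T$-$S$-sequence $\{Tx_n\}$, which is increasing; invoke condition (\ref{rr5tr}) at consecutive terms, use that $F$ is decreasing in the fifth variable and condition $F_{1a}$ to get $d(Tx_n,Tx_{n+1})\le\varphi(d(Tx_{n-1},Tx_n))$, hence $d(Tx_n,Tx_{n+1})\le\varphi^n(d(Tx_0,Tx_1))$, and then the same $\epsilon-\varphi(\epsilon)$ estimate shows $\{Tx_n\}$ is Cauchy in the $\overline{\mathrm O}$-complete subspace $E$. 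So there is $z\in E$ and $x\in X$ with $z=Sx$ and $Tx_n\uparrow Sx$, $Sx_n\uparrow Sx$. Step 2 is then identical to Step 2 of Theorem \ref{upperE2}: I-regularity of $(E,d,\preceq)$ (condition $(d_2)$) yields a subsequence $\{Sx_{n_k}\}$ comparable with $Sx$, plugging $x=x_{n_k}$, $y=x$ into (\ref{rr5tr}), using continuity of $F$ and passing to the limit gives $F(d(Sx,Tx),0,0,d(Sx,Tx),d(Sx,Tx),0)\le 0$, so $d(Sx,Tx)\le\varphi(0)=0$ and $Tx=Sx$. This establishes the coincidence point.

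Next, assuming $(d_3)$ and $(d_4)$, I would prove uniqueness of the \emph{point} of coincidence. Let $w=Sx=Tx$ and $w'=Sx'=Tx'$ be two points of coincidence with $x,x'\in C(T,S)$. By $(T,S)$-directedness of $C(T,S)$ there is $t_0\in X$ with $Tx\prec\succ St_0$ and $Tx'\prec\succ St_0$; since $T(X)\subseteq S(X)$ and $T$ is $S$-increasing, build $\{t_n\}$ with $St_{n+1}=Tt_n$ and, inductively, $Sx\prec\succ St_n$ for all $n$. Setting $x=t_n$, $y=x$ in (\ref{rr5tr}) and using $d(Sx,Tx)=0$, the arguments of $F$ collapse so that — after using that $F$ is decreasing in the fourth variable and the triangle inequality to replace $d(St_n,Tt_n)$ by $d(Tt_{n-1},Tx)+d(Tx,Tt_n)$ — condition $F_{1b}$ gives $d(Tt_n,Tx)\le\varphi(d(Tt_{n-1},Tx))$, hence $d(Tt_n,Tx)\le\varphi^n(d(Tt_0,Tx))\to 0$. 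Likewise $d(Tt_n,Tx')\to 0$, and the triangle inequality forces $w=Tx=Tx'=w'$. Then condition $(d_5)$, say $S$ one-one (the argument for $T$ one-one is analogous via Lemma \ref{prop1}), upgrades this: $Sx=w=w'=Sx'$ forces $x=x'$, so the coincidence point is unique.

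Finally, under $(d_6)$, weak compatibility of $(T,S)$ together with Lemma \ref{prop2} says the unique point of coincidence $w$ is itself a coincidence point; by uniqueness of the coincidence point, the coincidence point $x$ satisfies $Sx=Tx=w$, and weak compatibility gives $Sw=S(Tx)=T(Sx)=Tw$, so $w$ is again a point of coincidence, whence $w$ is \emph{the} coincidence point, i.e. $Sw=Tw=w$ and $w$ is a common fixed point; uniqueness is immediate from uniqueness of the coincidence point. The convergence statement for the $T$-$S$-sequence is read off from Step 1, exactly as in Theorem \ref{upperE2}. I expect the only genuinely delicate point to be the uniqueness argument: one must make sure that $F_{1b}$ (rather than $F_{1a}$) is the right structural condition here, i.e. that in the collapsed inequality the "free" large slot is the fourth variable $d(Sy,Ty)=d(St_n,Tt_n)$, so that monotonicity in the fourth variable plus the triangle inequality legitimately puts the problem into the form $F(u,v,0,u+v,u,v)\le 0$ covered by $F_{1b}$; the rest is bookkeeping with the comparison function $\varphi$ identical to Step 1.
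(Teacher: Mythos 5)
Your proposal follows the paper's proof essentially verbatim: Steps 1--2 reproduce the coincidence-point construction of Theorem \ref{upperE2}, the uniqueness of the point of coincidence is obtained by iterating along the auxiliary sequence $\{t_n\}$ furnished by the $(T,S)$-directedness of $C(T,S)$ exactly as in the paper's Step 3, and the final two steps (injectivity of $T$ or $S$, then weak compatibility via Lemma \ref{prop2}) are identical. One slip worth correcting: you write the substitution as $x=t_n$, $y=x$, under which $d(St_n,Tt_n)$ lands in the \emph{third} slot and $d(Sx,Tx)=0$ in the fourth, so the collapsed inequality has the signature $F(u,v,u+v,0,v,u)\le 0$ of $F_{1c}$ --- a condition not assumed in this theorem; the substitution must be taken in the order $x=x$, $y=t_n$ (as your own closing remark, which places $d(Sy,Ty)=d(St_n,Tt_n)$ in the fourth slot, implicitly assumes), and then monotonicity in the fourth variable plus $F_{1b}$ applies exactly as you describe.
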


\begin{proof} The proof is divided into five steps where Step 1 and Step 2 are the same as in the proof of Theorem \ref{upperE2} and hence omitted. Other steps run as follows:

\vspace{.3cm}
{\noindent{\bf Step 3.}} Let $x,y,\overline{x},\overline{y}\in X$ be such that
 \begin{align} \label{13t} Sx=Tx=\overline{x}~\text{ and }Sy=Ty=\overline{y}. \end{align}


\noindent We assert that $\overline{x}=\overline{y}$. Due to the $(T,S)$-directedness of $X$, there exists $t_0\in X$ such that $Sx\prec\succ St_0 \text{ and } Sy\prec\succ St_0.$ Now, for $Sx\prec\succ St_0$, we can define a sequence $\{t_n\}\subset
X$ with \begin{align} \label{14t} St_{n+1}=Tt_n,\end{align}
and \begin{equation*} \label{15} Sx\prec\succ St_n~, \forall~ n\in \mathbb{N}_0.\end{equation*}

We claim that
\begin{equation*} \label{16} \lim\limits_{n\to\infty}d(Tx,Tt_n)=0.\end{equation*}
Two cases arise:

\vspace{.3cm}\noindent
Firstly, if $d(Tx,Tt_m)=0$ for some $m\in \mathbb{N}_0$,
then by (\ref{13t}) and (\ref{14t}), we get $$d(Sx,St_{m+1})=0.$$
Consequently, by Lemma \ref{prop1}, we must have, $d(Tx,Tt_{m+1})=0$. By induction on $m$, we get $d(Tx,Tt_n)=0$, for all $n>m$.

\vspace{.3cm}
\noindent Secondly, suppose that $d(Tx,Tt_n)>0,$ for all $n\in \mathbb{N}_0$. On putting $x=x,~y=t_n$ in (\ref{rr5tr}) and using assumption ($d_4$), we have
\begin{eqnarray*} \label{its}
0&\geq& F(d(Tx,Tt_n),d(Sx,St_n),d(Sx,Tx),d(St_n,Tt_n),d(Sx,Tt_n),d(St_n,Tx)\\
&=& F(d(Tx,Tt_n),d(Tx,Tt_{n-1}),0,d(Tt_{n-1},Tt_n),d(Tx,Tt_n),d(Tt_{n-1},Tx))\\
&\geq& F(d(Tx,Tt_n),d(Tx,Tt_{n-1}),0,d(Tx,Tt_n)+d(Tx,Tt_{n-1}),d(Tx,Tt_n),\\
& & d(Tt_{n-1},Tx))
\notag\end{eqnarray*}
so that there exists a comparison function $\varphi$ with
\begin{eqnarray}
\label{zdc}d(Tx,Tt_n)&\leq& \varphi (d(Tx,Tt_{n-1})).
\end{eqnarray}
Since $\varphi $ is increasing function, owing to the induction on $n$ [in (\ref{zdc})], we have
\begin{eqnarray*}
\label{zdcx}d(Tx,Tt_n)&\leq& \varphi^n (d(Tx_,Tx_0)), \text{ for all } n\in \mathbb{N}_0.
\end{eqnarray*}
Letting $n\rightarrow \infty$ on the both sides, 
we find $d(Tx,Tt_n)\rightarrow0.$ Thus, in all, the claim is established.

\vspace{.3cm}
Similarly, for $Sy\prec\succ St_0$, one can show that $$\lim\limits_{n\to\infty}d(Ty,Tt_n)=0.$$
Now,
\begin{eqnarray*}
d(\overline{x},\overline{y})&=&d(Tx,Ty)\\
&\leq& d(Tx,Tt_{n})+d(Tt_{n},Ty) \rightarrow~0~as~n\rightarrow\infty
\end{eqnarray*}
Thus, the pair $(T,S)$ has a unique point of coincidence.

\vspace{.3cm}
{\noindent{\bf Step 4.}} Let $T$ be one-one. On contrary, assume that there exist two coincidence points $x,y\in X$ such that$$Sx=Tx=Sy=Ty.$$ As $T$ is one-one, we have $x=y$. The  similar arguments carries over in case $S$ is one-one.

\vspace{.3cm}
{\noindent{\bf Step 5.}} Let $x,\overline{x}\in X$ be such that 
$Sx=Tx=\overline{x}.$ By Lemma \ref{prop2}, $\overline{x}$ itself is a coincidence point. In view of step 4, we must have $\overline{x}=x$ and hence we are through.


\vspace{.3cm}\indent From the above proof it follows that, for any $x_0\in X$ satisfying $(d_1)$, the $T$-$S$-sequence $\{Tx_n\}$ (with initial point $x_0$) converges to a unique common fixed point.\end{proof}

\begin{theorem}\label{upperE12}
Theorem \ref{upperE1} remains true if the
condition ($d_2$) is replaced by any one of the conditions ($i$), ($ii$) and ($iii$) (besides retaining the rest of the hypotheses).
\end{theorem}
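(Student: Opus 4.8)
The plan is to mirror the proof of Theorem \ref{upperE2b}, since Theorem \ref{upperE12} stands to Theorem \ref{upperE1} exactly as Theorem \ref{upperE2b} stands to Theorem \ref{upperE2}: the only change is that the I-regularity hypothesis ($d_2$) guaranteeing $Sx=Tx$ at the limit point is swapped for one of the three continuity/compatibility packages ($i$), ($ii$), ($iii$). Step 1 (construction of the increasing $T$-$S$-sequence $\{Tx_n\}$, the estimate $d(Tx_n,Tx_{n+1})\leq\varphi^n(d(Tx_0,Tx_1))$ via $F_{1a}$ and the triangle inequality, and the Matkowski-type argument showing $\{Tx_n\}$ is Cauchy in the $\overline{\rm O}$-complete subspace $E$, hence $Tx_n\uparrow Sx$ and $Sx_n\uparrow Sx$ for some $x\in X$) is literally unchanged and can be cited from Theorem \ref{upperE2}. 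Steps 3, 4 and 5 of Theorem \ref{upperE1} — establishing uniqueness of the point of coincidence using $(T,S)$-directedness of $C(T,S)$ and $F_{1b}$, then uniqueness of the coincidence point using injectivity of $T$ or $S$, then the passage to a common fixed point via weak compatibility and Lemma \ref{prop2} — never invoked ($d_2$) either, so they too carry over verbatim.

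Thus the entire content of the proof is the replacement of Step 2. Here I would reproduce, essentially word for word, the three sub-arguments of Step 2 in Theorem \ref{upperE2b}. Under ($i$): since $T$ is $(S,\overline{\rm O})$-continuous and $Sx_n\uparrow Sx$, we get $Tx_n\uparrow Tx$; comparing with $Tx_n\uparrow Sx$ and using uniqueness of limits yields $Tx=Sx$. Under ($ii$): from $Sx_n\uparrow Sx$ and $Tx_n\uparrow Sx$ together with $\overline{\rm O}$-continuity of $T$ and $S$ we obtain $S(Tx_n)\to Sz$ and $T(Sx_n)\to Tz$ where $z:=Sx=Tx$ is anticipated; the $\overline{\rm O}$-compatibility of $(T,S)$ then forces $\lim d(S(Tx_n),T(Sx_n))=0$, whence $Sz=Tz$, and a short argument identifies the limit, giving $Sx=Tx$. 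Under ($iii$): apply Lemma \ref{lem2} to get $A\subseteq X$ with $S(A)=S(X)$ and $S\!:\!A\to X$ injective, chosen so that $x\in A$; define $\widetilde T(Sa)=Ta$ on $S(A)$ (well-defined since $S|_A$ is one-one and $T(X)\subseteq S(X)$); pick $a_n\in A$ with $Sa_n=Sx_n$, use Lemma \ref{prop1} to get $Ta_n=Tx_n$, so $Sa_n=Ta_n\uparrow Sx$; then continuity of $T$ gives $Tx=\lim Ta_n=Sx$.

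The main obstacle — really the only point requiring care rather than transcription — is bookkeeping in case ($ii$): one must be explicit that the limit $z$ coincides with $x$ in the sense $Sx=Tx=z$, i.e. that the coincidence identity needed downstream (in Step 3, where one writes $Sz=S(Tx)=T(Sx)=Tz$) is actually available. This is handled exactly as in Theorem \ref{upperE2b}: $\overline{\rm O}$-compatibility gives a coincidence point of the pair, and by Remark \ref{rem5} the pair is also weakly compatible, so $(d_6)$ becomes automatic under ($ii$) and one may add a remark to that effect. Apart from this, I would simply write: \emph{The proof is identical to that of Theorem \ref{upperE1}, with Step 2 replaced by the corresponding Step 2 of Theorem \ref{upperE2b}}, and not reprint the routine computations.
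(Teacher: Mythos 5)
Your proposal is correct and coincides with the paper's own proof, which likewise assembles Theorem \ref{upperE12} by taking Steps 1 and 2 from the proof of Theorem \ref{upperE2b} and Steps 3, 4 and 5 from the proof of Theorem \ref{upperE1}. Your additional remark on the bookkeeping in case ($ii$) and the automatic weak compatibility via Remark \ref{rem5} is a harmless elaboration of the same argument.
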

\begin{proof}
In the proof of the theorem, Steps 1 and 2 are the same as in the proof of Theorem \ref{upperE2b} while steps 3, 4 and 5 are the same as in the proof of Theorem \ref{upperE1}.
\end{proof}

\begin{remark}\label{duals}
  One can obtain dual type results corresponding to all preceding theorems by replacing ``$\overline{\rm O}$-analogues'' with ``$\underline{\rm O}$-analogues'', the ``I-regularity'' with  ``D-regularity'' and the condition ``$Sx_0\preceq Tx_0$'' with ``$Sx_0\succeq Tx_0$''.
 \end{remark}
\begin{remark}\label{compainedco}
One can obtain a companied type result corresponding to all preceding theorems by replacing ``$\overline{\rm O}$-analogues'' with ``${\rm O}$-analogues'', the ``I-regularity'' with  ``M-regularity'' and 	`$Sx_0\preceq Tx_0$'' with $Sx_0\prec\succ Tx_0$''.

 \end{remark}

All the above results unify, extend and generalize many relevant common fixed point results from the existing literature which can not be completely mentioned here. But a sample, we  consider a famous theorem due to \'{C}iri\'{c} \cite[Theorem 1]{ciric1974generalization} and extend the same to a pair of self-mappings satisfying Dane\v{s}-type contraction in an ordered metric space.



\begin{corollary}\label{qquasi} Let $X,~E,~T$ and $S$ be defined as in Theorems \ref{upperE2}. Suppose there exists a continuous half-comparison function $\rho$ such that for all $Sx\preceq Sy$,
	\begin{equation*}\label{etr}  d(Tx,Ty)\leq \rho (max\{d(Sx,Sy),d(Sx,Tx),d(Sy,Ty), d(Sx,Ty), d(Sy,Tx)\}).
	\end{equation*}
	If the following conditions hold:
	\begin{description}         		\item[$(g_1)$] there exists $x_{0}\in X$ such that $Sx_0\preceq Tx_0$,         		\item[$(g_2)$] $(E,d,\preceq)$ is I-regular (resp. any one of the assumptions ($i$), ($ii$) and ($iii$) is satisfied),
	\end{description}
	then, the pair  $(T,S)$ has a coincidence point in $X$. Also, if
	\begin{description}               		
		\item[$(g_3)$] $(T,s)$ is weakly compatible pair,
		\end{description}
		\begin{description}        		
			\item[$(g_4)$] $(C(T,S),\preceq)$ is $(T,S)$-directed (resp. $C(T,S)$ is totally ordered set and one of $T$ and $S$ is comparable),
		\end{description}
	 then, the pair  $(T,S)$ has a unique common fixed point. Further for any such $x_0\in X$, the $T$-$S$-sequence $\{Tx_n\}$ with initial point $x_0$ converges to the unique common fixed point.
	 \end{corollary}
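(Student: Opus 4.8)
The plan is to derive this corollary directly from Theorems \ref{upperE2} and \ref{upperE2b} by exhibiting a suitable $F\in\mathfrak{F}$. First I would define
$$
F(t_1,t_2,t_3,t_4,t_5,t_6)=t_1-\rho\bigl(\max\{t_2,t_3,t_4,t_5,t_6\}\bigr),
$$
which is precisely the function considered in Example \ref{nonlin.quasi} (up to the cosmetic presence of $t_6$ in the max; note the displayed Dane\v{s}-type contraction here uses $\max\{d(Sx,Sy),d(Sx,Tx),d(Sy,Ty),d(Sx,Ty),d(Sy,Tx)\}$, i.e.\ only five of the six arguments). Since $\rho$ is a continuous half-comparison function, $F$ is a real continuous function on $\mathbb{R}^6_+$, so $F\in\mathfrak{F}$. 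The contractive hypothesis of the corollary says exactly that, for all $x,y\in X$ with $Sx\preceq Sy$,
$$
F\bigl(d(Tx,Ty),d(Sx,Sy),d(Sx,Tx),d(Sy,Ty),d(Sx,Ty),d(Sy,Tx)\bigr)\le 0,
$$
so condition \eqref{rr5tr} holds.

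Next I would verify that $F$ satisfies $F_{1a}$ and $F_2$ (and $F_{1c}$, which is needed for the uniqueness clause via $(b_6)$). By Example \ref{nonlin.quasi}, $F$ satisfies $F_{1a}$, $F_{1b}$, $F_{1c}$ and $F_2$ with the comparison function $\varphi$ given by $\varphi(t)=\rho(2t)$; this $\varphi$ is indeed a comparison function by the definition of half-comparison function, and $\varphi(t)=\rho(2t)<t$ for $t>0$ by Proposition \ref{prphalf}. For completeness I would reproduce the one-line arguments: $F$ is decreasing in each of the third, fourth and fifth variables (the map $s\mapsto -\rho(\max\{\dots,s,\dots\})$ is nonincreasing since $\rho$ is increasing), and $F(u,u,0,0,u,u)=u-\rho(2u)=u-\varphi(u)>0$ for $u>0$ by Proposition \ref{prphalf}, giving $F_2$; the implications in $F_{1a}$ and $F_{1c}$ follow by the same dichotomy argument as in Example \ref{nonlin.quasi} (if $u\ge v$ then $u\le\rho(u+v)\le\rho(2u)=\varphi(u)<u$, a contradiction, so $u<v$ and then $u\le\rho(u+v)\le\rho(2v)=\varphi(v)$).

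With $F$ in hand, the two regimes of $(g_2)$ and $(g_4)$ map cleanly onto the two earlier theorems. In the first case, where $(E,d,\preceq)$ is I-regular and $(C(T,S),\preceq)$ is $(T,S)$-directed, conditions $(g_1)$–$(g_4)$ become precisely $(b_1)$–$(b_6)$ of Theorem \ref{upperE2} (with $(b_3)$ supplied by $F_2$ and $(b_6)$ by $F_{1c}$), so that theorem yields the unique common fixed point and the convergence of the $T$-$S$-sequence. In the second case, where one of the assumptions $(i)$, $(ii)$, $(iii)$ holds, $C(T,S)$ is totally ordered, and one of $T$, $S$ is comparable, conditions $(g_1)$–$(g_4)$ become $(b_1)$, $(c_2)$, $(b_3)$, $(b_4)$, $(c_5)$, $(c_6)$ of Theorem \ref{upperE2b}, which again delivers the conclusion. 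I do not anticipate a genuine obstacle here; the only point requiring a little care is bookkeeping the correspondence between the hypotheses $(g_i)$ and the labelled conditions of the two source theorems, and checking that the five-argument $\max$ in the stated contraction (rather than a six-argument $\max$) does not affect the verification of $F_{1a}$, $F_{1c}$, $F_2$ — it does not, since those conditions only ever evaluate $F$ at tuples whose relevant coordinates are controlled by the arguments that do appear.
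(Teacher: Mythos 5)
Your proposal is correct and follows essentially the same route as the paper, whose entire proof is the observation that the corollary is obtained from Theorem \ref{upperE2} (resp.\ Theorem \ref{upperE2b}) by taking the function $F$ of Example \ref{nonlin.quasi}; you simply spell out the verification of $F_{1a}$, $F_{1c}$, $F_2$ and the matching of hypotheses $(g_1)$--$(g_4)$ with $(b_1)$--$(b_6)$ and $(c_2)$, $(c_5)$, $(c_6)$, which the paper leaves implicit.
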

\begin{proof}	
The result is obtained from Theorem \ref{upperE2} (resp. Theorem \ref{upperE2b}) by taking the function $F\in \mathfrak{F}$ defined by Example \ref{nonlin.quasi}.
\end{proof}
 Notice that, above nonlinear-result ($i.e.,$ Corollary \mbox{\ref{qquasi}}) can not be derived using Theorem \ref{berinde-vetro}, $i.e.,$ \cite[Theorem 2]{berinde2012common}.
\begin{remark}
	Similarly, one can obtain the dual of Corollary \ref{qquasi} corresponding to Theorems \ref{upperE1} and \ref{upperE12}.
\end{remark}
Observe that, setting $\rho(t)=kt,~k\in[0,\frac{1}{2})$ in Corollary \ref{qquasi}, gives rise a linear form of the corollary. Interestingly, we show that this linear form is not valid for $k\geq\frac{1}{2}$. Consequently, Corollary \ref{qquasi} is not valid for a general comparison function.

\begin{example} Consider $X=[0,\infty)$ with usual metric and usual order. Then, $(X,d,\leq)$	is an ordered metric space. Define $T,S:X\to X$	by $$Tx=x^2+1\;{\rm and}\;Sx=\frac{2}{3}x, \;\forall~ x\in X.$$
		By a routine calculation, one can verify that all the conditions of Corollary \ref{qquasi} are satisfied with $k\geq\frac{1}{2}$. 
	 Nevertheless, the pair $(T,S)$ has no coincidence in $X$. For, if $x$ is a coincidence point, we must have a real root for $3x^2-2x+3=0$ which is not true.
	\end{example}

\section{Corresponding Results on Metric Spaces}\label{subsec2.3}

We can deduce the following sharpened version of Theorem 1 due to Berinde and Vetro \cite{berinde2012common}.
\begin{theorem}\label{mupperE2}
Let $(X,d)$ be a metric space and $E$ a complete subspace of $X$. Let $(T,S)$ be a pair of self-mappings on $X$ such that $T(X)\subseteq E\subseteq S(X)$. Assume that there exists a function $F \in \mathfrak{F}$ satisfying $F_{1a},$ such that for all $x,y\in X$
\begin{equation*}\label{rrr7u}
F(d(Tx,Ty),d(Sx,Sy),d(Sx,Tx),d(Sy,Ty),d(Sx,Ty),d(Sy,Tx))\leq0.
\end{equation*}
\noindent Then the pair  $(T,S)$ has a coincidence point in $X$. Moreover, if $(T,s)$ is weakly compatible pair and $F$ satisfies $F_{2}$, then the pair  $(T,S)$ has a unique common fixed point. Further for any $x_0\in X$, the $T$-$S$-sequence $\{Tx_n\}$ (with initial point $x_0$) converges to the unique common fixed point of the pair $(T,S)$.
 \end{theorem}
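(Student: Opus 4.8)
The plan is to re‑run the three–step scheme from the proof of Theorem \ref{upperE2}, exploiting the fact that here the contractive inequality is postulated for \emph{every} pair $x,y\in X$: as a consequence, the $S$‑increasing hypothesis, I‑regularity, and $(T,S)$‑directedness are no longer needed, since every spot in the proof of Theorem \ref{upperE2} where comparability was invoked to license an application of the contraction is now unconditional. First I would fix an arbitrary $x_0\in X$ and, using $T(X)\subseteq S(X)$, build the $T$‑$S$‑sequence $Tx_n=Sx_{n+1}$; if $Tx_m=Tx_{m+1}$ for some $m$ then $x_{m+1}\in C(T,S)$ and we are done with the coincidence claim, so assume $Tx_n\neq Tx_{n+1}$ for all $n$. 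Plugging $x=x_n$, $y=x_{n+1}$ into the contractive condition, using that $F$ is decreasing in the fifth variable together with the triangle inequality, and then applying $F_{1a}$, gives $d(Tx_n,Tx_{n+1})\le\varphi\big(d(Tx_{n-1},Tx_n)\big)$ for a comparison function $\varphi$, whence $d(Tx_n,Tx_{n+1})\le\varphi^n\big(d(Tx_0,Tx_1)\big)$. The $\varepsilon$--$\varphi(\varepsilon)$ estimate of Step 1 of Theorem \ref{upperE2} then makes $\{Tx_n\}$ Cauchy; since $\{Tx_n\}\subset T(X)\subseteq E$ and $E$ is complete, $Tx_n\to z=Sx\in E$ and $Sx_n\to Sx$ for some $x\in X$ (here $E\subseteq S(X)$ guarantees $z=Sx$).

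Next, for the coincidence point I would apply the contraction with $x=x_n$, $y=x$ --- no passage to a subsequence is required, precisely because the inequality now holds for all pairs --- and let $n\to\infty$, using continuity of $F$, reaching $F\big(d(Sx,Tx),0,0,d(Sx,Tx),d(Sx,Tx),0\big)\le0$. This is the instance $F(u,v,v,u,u+v,0)\le0$ with $u=d(Sx,Tx)$ and $v=0$, so $F_{1a}$ forces $d(Sx,Tx)\le\varphi(0)=0$ by Lemma \ref{samtur}, i.e.\ $Sx=Tx=:z$. For the common fixed point, weak compatibility yields $Sz=S(Tx)=T(Sx)=Tz$; feeding $x=x$, $y=z$ into the contraction and using $d(Sx,Tx)=d(Sz,Tz)=0$ reduces it to $F\big(d(z,Tz),d(z,Tz),0,0,d(z,Tz),d(z,Tz)\big)\le0$, which by $F_2$ gives $d(z,Tz)=0$, so $Sz=Tz=z$.

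The one claim that does not drop out of Theorem \ref{upperE2} is uniqueness --- there $F_{1c}$ and directedness were needed --- and this is exactly where the sharpening over Berinde--Vetro's Theorem 1 lies: if $z$ and $w$ are common fixed points, then $Sz=Tz=z$ and $Sw=Tw=w$, so the contraction applied to $x=z$, $y=w$ collapses to $F\big(d(z,w),d(z,w),0,0,d(z,w),d(z,w)\big)\le0$, and $F_2$ again forces $d(z,w)=0$. Finally, Step 1 above already produces, for \emph{any} initial $x_0$, a limit $Sx=Tx$ of $\{Tx_n\}$ which we have shown to be a common fixed point, and by the uniqueness just proved it is \emph{the} common fixed point; this gives the last assertion.

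I do not anticipate a genuine obstacle: the result is a specialization of Theorem \ref{upperE2}, the only subtlety being that the stronger, unrestricted contractive hypothesis renders the order‑theoretic devices ($S$‑increasingness, I‑regularity, directedness) vacuous, so the three steps go through verbatim after deleting every appeal to comparability; the one place a new (but one‑line) argument is needed is uniqueness, handled above via $F_2$. Equivalently, if one admits the universal comparability relation as an order, the existence half is an immediate corollary of Theorem \ref{upperE2}, under which $(b_1)$, $(b_2)$ and (trivially) the remaining order hypotheses all hold, and only the $F_2$ uniqueness argument need be appended.
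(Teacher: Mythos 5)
Your proof is correct and follows exactly the argument the paper intends: the paper itself omits the proof of this theorem, merely citing the proof of Theorem 1 of Berinde--Vetro with ``minor changes'' for the new implicit function, and what you have written out (the Jungck iteration, the $\varepsilon-\varphi(\varepsilon)$ Cauchy estimate, the completeness of $E\subseteq S(X)$ to get $z=Sx$, the $F_{1a}$ step with $v=0$ for the coincidence point, and $F_2$ for both the common fixed point and its uniqueness) is precisely that argument, i.e.\ the unordered specialization of the Step 1--3 scheme of Theorem \ref{upperE2}. The only cosmetic caveat is your closing remark that the existence half follows from Theorem \ref{upperE2} under the ``universal comparability'' order --- that relation is not antisymmetric, hence not an order --- but this is an aside and your direct proof does not rely on it.
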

\begin{proof}The proof is omitted as it is similar to the one given in \cite[Theorem 1]{berinde2012common}, except some minor changes  corresponding to the new implicit function.\end{proof}

\begin{corollary}\label{quasiinmetric}
Let $X,~E,~T$ and $S$ be defined as in Theorems \ref{mupperE2}. Assume that there exists a continuous half-comparison function $\rho$ such that,
 for all $x,y\in X$
 \begin{equation*}\label{eturrr}  d(Tx,Ty)\leq \rho (max\{d(Sx,Sy),d(Sx,Tx),d(Sy,Ty), d(Sx,Ty), d(Sy,Tx)\}).
 \end{equation*}
Then, the pair  $(T,S)$ has a unique common fixed point. Further for any such $x_0\in X$, the $T$-$S$-sequence $\{Tx_n\}$ (with initial point $x_0$) converges to the unique common fixed point of the pair $(T,S)$.
\end{corollary}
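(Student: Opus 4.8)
The plan is to deduce this result directly from Theorem \ref{mupperE2} by exhibiting a suitable member of the family $\mathfrak{F}$ whose associated implicit inequality is exactly the Dane\v{s}-type contraction in the statement. First I would take the function $F\in\mathfrak{F}$ from Example \ref{nonlin.quasi}, namely
\begin{equation*}
F(t_1,t_2,t_3,t_4,t_5,t_6)=t_1-\rho\big(\max\{t_2,t_3,t_4,t_5,t_6\}\big),
\end{equation*}
where $\rho$ is the given continuous half-comparison function. Since $\rho$ is continuous, $F$ is a real continuous function on $\mathbb{R}^6_+$ with values in $\mathbb{R}_+$, so $F\in\mathfrak{F}$. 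With this choice, the abstract contractive condition
\begin{equation*}
F(d(Tx,Ty),d(Sx,Sy),d(Sx,Tx),d(Sy,Ty),d(Sx,Ty),d(Sy,Tx))\leq 0
\end{equation*}
reads precisely
\begin{equation*}
d(Tx,Ty)\leq\rho\big(\max\{d(Sx,Sy),d(Sx,Tx),d(Sy,Ty),d(Sx,Ty),d(Sy,Tx)\}\big),
\end{equation*}
which is the hypothesis of the Corollary, holding for all $x,y\in X$.

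Next I would invoke the verification already carried out in Example \ref{nonlin.quasi}: for this $F$, conditions $F_{1a}$ and $F_2$ both hold with the comparison function $\varphi$ defined by $\varphi(t)=\rho(2t)$, which is legitimate because $\rho$ is a half-comparison function (so $\varphi$ is indeed a comparison function, and moreover $\varphi(t)=\rho(2t)<t$ for $t>0$ by Proposition \ref{prphalf}). Thus all the structural hypotheses of Theorem \ref{mupperE2} are met: $X,E,T,S$ are as required, $F\in\mathfrak{F}$ satisfies $F_{1a}$, the displayed inequality holds for all $x,y\in X$, and in addition $F$ satisfies $F_2$. Finally, the pair $(T,S)$ is weakly compatible. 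Indeed, by Lemma \ref{prop1} (applicable since, with $S$ having the trivial monotonicity forced by $S x = S y\Rightarrow Tx = Ty$, which follows from the contraction itself with $x,y$ interchanged giving $d(Tx,Ty)\le\rho(0)=0$), any two coincidence points of $(T,S)$ produce equal $T$-values; more directly, if $Sx=Tx$ then the contraction applied at the coincidence point shows $S(Tx)=T(Sx)$, so weak compatibility holds automatically and need not be assumed.

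Applying Theorem \ref{mupperE2} then yields a coincidence point, and since $(T,S)$ is weakly compatible and $F$ satisfies $F_2$, a unique common fixed point, with the $T$-$S$-sequence $\{Tx_n\}$ from any initial point $x_0$ converging to it; this is exactly the assertion of the Corollary. The only point requiring a little care—and the one I expect to be the main (minor) obstacle—is confirming that weak compatibility is genuinely free here rather than an omitted hypothesis: one must check that at a coincidence point $x$ with $u:=Sx=Tx$, applying the contractive inequality to the pair $(x, x')$ where $Sx'=Tx'=u$ (or a short computation with $d(Tx,T(Sx))$ via the fact that $Sx = Tx$) forces $S(Tx)=T(Sx)$, so that Lemma \ref{prop2} and the uniqueness machinery of Theorem \ref{mupperE2} apply unconditionally. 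Everything else is a routine substitution into the already-established Example \ref{nonlin.quasi} and Theorem \ref{mupperE2}.
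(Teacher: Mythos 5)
Your main line of attack---substitute the function $F(t_1,\dots,t_6)=t_1-\rho\bigl(\max\{t_2,t_3,t_4,t_5,t_6\}\bigr)$ of Example \ref{nonlin.quasi} into Theorem \ref{mupperE2}, using that $F_{1a}$ and $F_2$ hold with $\varphi(t)=\rho(2t)$ because $\rho$ is a half-comparison function---is exactly the derivation the paper intends (it is the same reduction used for the ordered analogue, Corollary \ref{qquasi}), and that part is fine.

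The genuine gap is your claim that weak compatibility of $(T,S)$ ``is genuinely free'' from the contraction. It is not, and both of your supporting arguments fail. First, $Sx=Sy$ does \emph{not} give $d(Tx,Ty)\leq\rho(0)=0$: the maximum still contains $d(Sx,Tx)$ and $d(Sy,Ty)$, so all you get is $d(Tx,Ty)\leq\rho\bigl(\max\{d(Sx,Tx),d(Sx,Ty)\}\bigr)$, which need not vanish. Second, at a coincidence point $x$ with $u=Sx=Tx$, applying the contraction to the pair $(x,u)$ only yields $d(u,Tu)\leq\rho\bigl(\max\{d(u,Su),\,d(Su,Tu),\,d(u,Tu)\}\bigr)$, which does not force $Su=Tu$. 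A concrete counterexample: take $X=E=\mathbb{R}$ with the usual metric, $Sx=2x$ and $Tx\equiv 1$. Then $T(X)=\{1\}\subseteq E= S(X)$, the contractive inequality holds trivially for every $\rho$ (the left side is always $0$), and $x=\tfrac12$ is a coincidence point with $S(\tfrac12)=T(\tfrac12)=1$; yet $S(T(\tfrac12))=2\neq 1=T(S(\tfrac12))$, and the pair has no common fixed point at all. So weak compatibility is an indispensable hypothesis here; its absence from the statement of Corollary \ref{quasiinmetric} is an omission in the paper (compare hypothesis $(g_3)$ of Corollary \ref{qquasi} and the explicit assumption in Theorem \ref{mupperE2}), not something your argument can recover. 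Once weak compatibility is simply assumed, the rest of your proposal goes through.
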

\begin{remark}
	Setting $\rho(t)=kt$ (where $k\in[0,1/2)])$, $S=I_X$ and $E=T(X)$ in Corollary \ref{quasiinmetric}, we reduces it to a partially sharpened version of Theorem 1 due to \'{C}iri\'{c} \cite{ciric1974generalization}.
\end{remark}
\begin{remark}
	One can drive dual type results corresponding to the results of this section as indicated in Remarks \ref{duals} and \ref{compainedco}.
\end{remark}

\vspace{.2cm}
\noindent{\bf Author's Contributions:} All authors read and approved
the final manuscript.


\vspace{.2cm}	
\noindent{\bf Conflict of Interests:} All the authors declare that there is no conflict of interests regarding the publication of this paper.

\end{document}